\newtheorem{theorem}{Theorem}[section]
\newcommand{\zz}{\mathbb{Z}}
\newcommand{\pp}{\mathbb{P}}
\renewcommand{\gg}{\mathbb{G}}
\newcommand{\E}{\mathcal{E}}
\newcommand{\F}{\mathcal{F}}
\newcommand{\G}{\mathcal{G}}
\renewcommand{\L}{\mathcal{L}}
\newcommand{\N}{\mathcal{N}}
\renewcommand{\S}{\mathcal{S}}
\renewcommand{\O}{\mathscr{O}}
\newcommand{\e}{\vec{e}}
\newcommand{\Sigmabar}{\overline{\Sigma}}
\DeclareMathOperator{\codim}{codim}
\newtheorem{Lemma}[theorem]{Lemma}
\DeclareMathOperator{\spec}{Spec}
\DeclareMathOperator{\Proj}{Proj}
\DeclareMathOperator{\Sym}{Sym}
\DeclareMathOperator{\Pic}{Pic}
\DeclareMathOperator{\rank}{rank}
\DeclareMathOperator{\Quot}{Quot}
\DeclareMathOperator{\Mor}{Mor}
\DeclareMathOperator{\Def}{Def}
\theoremstyle{definition}
\newtheorem{Example}[theorem]{Example}
\theoremstyle{remark}
\newtheorem*{remark}{Remark}
\theoremstyle{remark}
\newtheorem*{warning}{Warning}
\tikzset{
  symbol/.style={
    draw=none,
    every to/.append style={
      edge node={node [sloped, allow upside down, auto=false]{$#1$}}}
  }
}
\tikzset{cong/.style={draw=none,edge node={node [sloped, allow upside down, auto=false]{$\cong$}}},
         Isom/.style={draw=none,every to/.append style={edge node={node [sloped, allow upside down, auto=false]{$\cong$}}}}}
\numberwithin{equation}{section}
\title{Universal degeneracy classes for vector bundles on $\pp^1$ bundles}
\author{Hannah K. Larson}
\date{\today}
\begin{document}
\maketitle

\begin{abstract}
Given a vector bundle on a $\pp^1$ bundle, the base is stratified by degeneracy loci measuring the spitting type of the vector bundle restricted to each fiber. 
The classes of these degeneracy loci in the Chow ring or cohomology ring of the base are natural invariants characterizing the degenerations of the vector bundle.
When these degeneracy loci occur in the expected codimension, we find their classes. This yields universal formulas for degeneracy classes in terms of naturally arising vector bundles on the base. Our results hold over arbitrary fields of any characteristic.
\end{abstract}

\section{Introduction}

Vector bundles on families of rational curves arise naturally in many geometric situations.
The Grothendieck-Birkoff Theorem states that any vector bundle $E$ on $\pp^1$ splits as a direct sum of line bundles $E \cong \O_{\pp^1}(e_1) \oplus \cdots \oplus \O_{\pp^1}(e_r)$ for integers $e_1 \leq \cdots \leq e_r$. 
We call a non-decreasing collection of integers $\vec{e} =  (e_1, \ldots, e_r)$ a \textit{splitting type}, and abbreviate the corresponding sum of line bundles by $\O(\vec{e})$.  

This paper studies this splitting phenomenon in families, over arbitrary fields of any characteristic. Let $W$ be a rank $2$ vector bundle on a scheme $B$ and form the $\pp^1$ bundle $\pi: \pp W \rightarrow B$.
Given a vector bundle $E$ on $\pp W$, we define \textit{splitting loci} of $E$ by
\[\Sigma_{\vec{e}}(E) := \{b \in B: E|_{\pi^{-1}(b)} \cong \O(\vec{e})\} \subset B.\]
The \textit{expected codimension} of the splitting locus $\Sigma_{\vec{e}}(E)$ is 
\[u(\vec{e}) := h^1(\pp^1, End(\O(\vec{e}))) = \sum_{i < j} \max\{0, e_j - e_i - 1\},\] 
which is the dimension of the deformation space of the bundle $\O(\vec{e})$. Deformation theory shows (see e.g. \cite[Ch. 14]{EH}) that if $\Sigma_{\vec{e}}(E)$ is non-empty,
\[\codim \Sigma_{\vec{e}}(E) \leq u(\vec{e}).\]


Splitting loci often have geometric significance.
Below are some examples of naturally arising vector bundles on families of rational curves and their splitting loci.

\begin{enumerate}
\item Consider a projective variety $X \subset \pp^n$ and suppose $F$ parametrizes rational curves of a given degree on $X$. The family of curves $\mathcal{C}$ parametrized by $F$ sits inside $X \times F$, and the normal bundle $\N_{\mathcal{C} / X \times F}$ is a family of vector bundles on $\mathcal{C}$ whose splitting loci govern the local geometry of these curves inside $X$.
The splitting of this bundle has been studied extensively for $X=\pp^n$ (see for example  \cite{AR, CR, EV81, EV82, R07, Sa80, Sa82}) and for other varieties (e.g.  \cite{D01, CR2, F14, L, K96}). The present work answers the remark following Proposition 2.3 of \cite{L}, which asked for the classes of splitting loci of the normal bundle of the universal line on the universal hypersurface.
\item Suppose $C$ is a projective curve with a non-constant degree $k$ map $f: C \rightarrow \pp^1$. Let $\L$ be a Poincar\'e line bundle (universal line bundle) on $\Pic^d(C) \times C$. Then $(id \times f)_* \L$ is a family of vector bundles on $\Pic^d(C) \times \pp^1$ whose splitting loci parametrize certain types of linear systems on $C$. The Brill-Noether loci of general $k$-gonal curves (see \cite{JR, Pf}) are certain unions of these splitting loci.
The universal formulas described here are used to show existence of all splitting degeneracy loci expected to occur for a general $k$-gonal curve \cite{L2}.
\item There is no analogous classification result for vector bundles on $\pp^n$ for $n > 1$. One approach to studying vector bundles on $\pp^n$ is by restricting to each line $\pp^1 \subset \pp^n$. For each vector bundle on $\pp^n$, this gives rise to a family of vector bundles on lines whose splitting loci are called the loci of jumping lines and are important geometric invariants of the bundle (see e.g. \cite{Ellia}). More generally, given a variety $X$ with a vector bundle $E$, one may study $E$ through the varieties of jumping curves in the moduli space of maps $\pp^1 \rightarrow X$. When $X = \gg(k, n)$ and $E$ is the tautological bundle, these splitting loci describe the different types of rational scrolls in $\pp^n$ 
(see e.g. \cite{C, LVX, VX}). As another example, \cite{Ca} studies the case when $X$ is the moduli space of stable vector bundles on a curve of genus at least $2$.
\end{enumerate}

We write $\vec{e}\nobreak\hspace{.16667em plus .08333em}' \leq \vec{e}$ if splitting type $\vec{e}$ can specialize to $\vec{e}\nobreak\hspace{.16667em plus .08333em}'$, that is if $e_1' + \ldots + e_k' \leq e_1 + \ldots + e_k$ for all $k$ (see Section \ref{sch}). With this notion, we define \textit{splitting degeneracy loci} (set theoretically) by
\[\overline{\Sigma}_{\vec{e}}(E) := \bigcup_{\vec{e}\nobreak\hspace{.16667em plus .08333em}' \leq \vec{e}} \Sigma_{\vec{e}\nobreak\hspace{.16667em plus .08333em}'}(E).\]
A priori, it is not clear how to construct the ``right" scheme structure on $\overline{\Sigma}_{\vec{e}}(E)$.
This subtlety will be discussed in Section \ref{sub}, where we confirm that the scheme structure we choose has the following minimality property: the tangent space at a point in the open stratum $\Sigma_{\vec{e}}(E) \subset \overline{\Sigma}_{\vec{e}}(E)$ is precisely those maps $\spec k[\epsilon]/(\epsilon^2) \rightarrow B$ so that the induced first order deformation of $E|_{\pi^{-1}(\spec k)}$ is trivial. 

The classes of splitting degeneracy loci are natural invariants characterizing the degenerations of a vector bundle on a $\pp^1$ bundle.
We give a constructive proof that, when splitting loci occur in the correct codimension, these classes are given by a universal formula in terms of the Chern classes of naturally arising vector bundles on the base.
In some cases, the formula is particularly simple.
\begin{Example} \label{-1,1}
Suppose $E$ is a rank $2$, degree $0$ vector bundle on $B \times \pp^1$. On the open set $B \backslash \overline{\Sigma}_{(-2,2)}$, the theorem on cohomology and base change shows that the pushforwards $\pi_*E$ and $\pi_* E(1)$ are locally free sheaves of ranks $2$ and $4$ respectively.
There is a natural map between rank $4$ vector bundles $\phi: \pi_*E \otimes H^0(\O(1)) \rightarrow \pi_* E(1)$, and
\[\overline{\Sigma}_{(-1,1)} = \{b \in B : \rank \phi_b \leq 3\}. \]
If $\codim \overline{\Sigma}_{(-1,1)} = 1$ then the class of $\overline{\Sigma}_{(-1,1)}$ in $B \backslash \overline{\Sigma}_{(-2,2)}$ is given by the Porteous formula (see e.g. \cite[Thm. 12.4]{EH}):
\[[\overline{\Sigma}_{(-1,1)}] = c_1(\pi_*E(1)) - 2c_1(\pi_*E).\]
If $\codim \overline{\Sigma}_{(-2,2)} > 1$, then this formula also holds in the Chow ring of $B$ by excision (see e.g. \cite[Prop. 1.14]{EH}).
\end{Example}
The above example worked because we could compute on the open subset of $B$ where $\pi_* E(1)$ and $\pi_*E$ were locally free, and the splitting locus $(-1, 1)$ was determined by a single rank condition. The latter fails in general (see Example \ref{s2ex}) and new ideas are needed to compute the classes of splitting degeneracy loci in general.

We give a closed formula for degeneracy classes for certain splitting types, and an inductive algorithm that works for all splitting types. 
\begin{theorem} \label{main}
Let $E$ be a vector bundle on a $\pp^1$ bundle $\pp W \rightarrow B$. 
Assume that $\codim \Sigma_{\vec{e}}(E) = u(\vec{e})$ and $\codim \Sigma_{{\vec{e}} \nobreak\hspace{.16667em plus .08333em}'} > u(\vec{e})$ for all $\vec{e}\nobreak\hspace{.16667em plus .08333em}' <\vec{e}$.
The class of $\overline{\Sigma}_{\vec{e}}(E)$  in $A^*(B)$ is given by a universal formula, depending only on $\vec{e}$, in terms of Chern classes of $\pi_*\O_{\pp W}(1), \pi_* E(m),$ and $\pi_*E(m-1)$ for $m$ suitably large.  This formula is computed by the procedure in Section \ref{gen}.
\end{theorem}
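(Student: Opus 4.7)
The plan is to prove the theorem by induction on the partial order $\leq$ on splitting types, working on successively larger open subsets of $B$ and reducing the computation to iterated Thom--Porteous formulas on auxiliary flag bundles.

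\textbf{Step 1 (Excision reduction).} Let $U = B \setminus \bigcup_{\vec{e}\,' < \vec{e}} \overline{\Sigma}_{\vec{e}\,'}(E)$. By hypothesis $\codim \Sigma_{\vec{e}\,'}(E) > u(\vec{e})$ for every $\vec{e}\,' < \vec{e}$, so $B \setminus U$ has codimension strictly greater than $u(\vec{e})$. By the excision sequence \cite[Prop.~1.14]{EH}, the restriction map $A^{u(\vec{e})}(B) \to A^{u(\vec{e})}(U)$ is an isomorphism on the range relevant to $[\overline{\Sigma}_{\vec{e}}(E)]$. It therefore suffices to exhibit the class on $U$ as a polynomial in Chern classes of bundles that naturally extend to all of $B$.

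\textbf{Step 2 (Twist to make pushforwards locally free).} Choose $m$ large enough that $R^1\pi_* E(m-1) = 0$ on $U$. Then by the theorem on cohomology and base change, $\F := \pi_* E(m)$ and $\G := \pi_* E(m-1)$ are locally free on $U$, with ranks predicted by the generic splitting type. The multiplication map $\mu: \G \otimes \pi_* \O_{\pp W}(1) \to \F$ is a morphism of vector bundles on $U$ whose pointwise rank is controlled by the fiberwise splitting type of $E$.

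\textbf{Step 3 (Auxiliary flag bundle).} For the given $\vec{e}$, construct a smooth proper map $\rho: X \to U$, built as a tower of Grassmannian/flag bundles in $\F$, $\G$, and $\pi_*\O(1)$, carrying tautological subbundles that lift $\mu$ to a morphism of vector bundles on $X$ with the following property: a locus $Z \subset X$ cut out by prescribed Porteous-style rank conditions has the expected codimension, and $\rho: Z \to \overline{\Sigma}_{\vec{e}}(E) \cap U$ is an isomorphism over the open stratum $\Sigma_{\vec{e}}(E)$. The combinatorics of $\vec{e}$ dictates which partial flags to introduce; Example~\ref{s2ex} shows that more than one rank condition is generally required, and the flag records the ``hidden'' data beyond a single rank drop.

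\textbf{Step 4 (Compute and push forward).} On $X$, compute $[Z]$ by the Thom--Porteous formula (or Kempf--Laksov), then push forward along $\rho$ using the standard formulas for tautological Chern classes on flag bundles. The result is a polynomial in $c_\bullet(\F),\; c_\bullet(\G),\; c_\bullet(\pi_*\O_{\pp W}(1))$; by Step~1 this polynomial represents $[\overline{\Sigma}_{\vec{e}}(E)] \in A^*(B)$, and depends only on $\vec{e}$ as required.

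The main obstacle is Step~3: prescribing the auxiliary flag bundle so that the lifted locus has the expected codimension and is birational to $\overline{\Sigma}_{\vec{e}}(E) \cap U$. This requires a combinatorial recipe for the flag ranks in terms of the gaps in $\vec{e}$, together with a tangent-space argument using the minimality of the scheme structure on $\overline{\Sigma}_{\vec{e}}(E)$ from Section~\ref{sub} to check that $\rho|_Z$ is generically an isomorphism rather than a positive-degree cover. Establishing transversality uniformly, and not merely in a single universal example, is the delicate point.
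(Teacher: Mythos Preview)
Your outline captures the excision reduction and the choice of $m$ correctly, but Step~3 is not a proof: you yourself flag it as ``the main obstacle,'' and the construction you gesture at---a tower of flag bundles in $\F$, $\G$, and $\pi_*\O_{\pp W}(1)$ together with rank conditions on the single multiplication map $\mu:\G\otimes\pi_*\O_{\pp W}(1)\to\F$---is exactly the approach the paper argues is insufficient. Rank conditions on $\mu$ (equivalently, on $\pi_*\psi$) at a fixed twist do not determine the full splitting type, and imposing several such conditions simultaneously runs into the non-transversality phenomenon of Example~\ref{s2ex}. Without a concrete recipe for the flag ranks and a transversality argument, Step~3 is a restatement of the problem rather than a solution.

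The paper's actual mechanism is different in two essential respects. First, the induction is on \emph{rank}, not on the partial order of splitting types: one writes $\O(\vec{e})^\vee(-m)=\O(-m_0)^{\oplus i}\oplus\O(\vec{a})$ and reduces to computing a splitting locus $Z_{\vec{a}}$ for a bundle of strictly smaller rank. Second, the auxiliary space is not a flag bundle over $B$ but a \emph{relative Quot scheme} $\Quot_{\pi^*\F}$ over the $\pp^1$ bundle, parametrizing subsheaves of $\pi^*\F$ of the rank and degree of $\O(\vec{a})$. This tracks high-degree subsheaves of $\ker\psi$ on each fiber of $\pi$, not merely the rank of the pushed-forward map $\pi_*\psi$. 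The desired class is then $\gamma_*(\sigma\cdot[Z_{\vec{a}}])$, where $\sigma$ is the top Chern class of a natural bundle and $[Z_{\vec{a}}]$ is known inductively (Lemma~\ref{zing}). Str{\o}mme's embedding of the Quot scheme into a product of Grassmann bundles (Theorem~\ref{em}) is what makes the pushforward explicit and yields the universal formula. The tangent-space minimality of Section~\ref{sub} is used, as you anticipate, to verify that $\gamma$ restricts to an isomorphism on the relevant locus.
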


\begin{remark}
Even if the dimension of $\overline{\Sigma}_{\vec{e}}(E)$ is larger than expected, the class resulting from this formula is still represented by a cycle supported on $\overline{\Sigma}_{\vec{e}}(E)$. In particular, if the expected class for $\overline{\Sigma}_{\vec{e}}(E)$ is non-zero, then $\overline{\Sigma}_{\vec{e}}(E)$ must be non-empty.
\end{remark}

\begin{remark}
The push forwards $\pi_* E(m)$ and $\pi_* E(m-1)$ for $m$ supplied by the algorithm in Section \ref{gen} will be locally free on a suitably large open subset, but need not be locally free on all of $B$. Using Lemma \ref{ind}, one may express their Chern classes in terms of vector bundles $\pi_* E(i)$ and $\pi_*E(i-1)$ for any $i$ large enough that $R^1\pi_*E(i-1) = 0$.
\end{remark}

\begin{remark}
One can deduce the classes of splitting loci on a general family of genus zero curves $\mathcal{C} \rightarrow B$ up to $2$-torsion by studying the fiber product
 \begin{center}
 \begin{tikzcd}
& \mathcal{C} \times_B \mathcal{C} \arrow{r}{q} \arrow{d}[swap]{p} &\mathcal{C} \arrow{d}{\pi} \\
& \mathcal{C} \arrow{r}[swap]{\pi} & B
 \end{tikzcd}
 \end{center}
The diagonal inside $\mathcal{C} \times_B \mathcal{C} $ is a degree $1$ divisor on each fiber, making $p$ into a $\pp^1$ bundle. Given a vector bundle $E$ on $\mathcal{C}$, we can therefore compute the class of $\overline{\Sigma}_{\vec{e}}(q^* E)$ on $\mathcal{C}$, and this locus is $\pi^{-1} (\overline{\Sigma}_{\vec{e}}(E))$. The relative tangent bundle $T_{\pi}$ of $\mathcal{C} \rightarrow B$ restricts to a degree $2$ line bundle on each fiber. Thus, $\pi_*(c_1(T_{\pi}) \cdot [\overline{\Sigma}_{\vec{e}}(q^* E)]) = 2[\overline{\Sigma}_{\vec{e}}(E)]$.
\end{remark}

This paper is organized as follows. In Section \ref{sch}, we review basic facts about splitting loci and describe an important example. In Section \ref{st}, we generalize results of Str{\o}mme in \cite{S} to relative Quot schemes over $\pp^1$ bundles. Section \ref{sub} describes the tangent spaces to splitting degeneracy loci along open strata.
In Section \ref{classes},
we find classes of certain splitting loci where the techniques of Example \ref{-1,1} readily generalize. Finally, Section \ref{gen} proves Theorem \ref{main} with an inductive procedure that computes the classes of all splitting degeneracy loci.

\subsection*{Acknowledgements} I would like to thank Ravi Vakil for many helpful conversations and Eric Larson for his assistance implementing code to perform calculations in Example \ref{-202}. Thanks also to David Eisenbud for drawing my attention to \cite[Conj. 5.1]{ES} and subtleties regarding the scheme structure on splitting loci.
 I am grateful to the Hertz Foundation, the NSF Graduate Fellowship, and the Stanford Graduate Fellowship for their generous support.

\section{Splitting degeneracy loci} \label{sch}
Given a vector bundle $E$ on $\pp^1$, knowing the splitting type of $E$ is equivalent to knowing the list of integers $h^0(\pp^1, E(m))$ for $m \in \zz$, or equivalently the list of integers $h^1(\pp^1, E(m))$ for $m \in \zz$. The multiplicity of $\O(-j)$ as a summand of $E$ is equal to the second difference function evaluated at $j$ of the Hilbert function $m \mapsto h^0(\pp^1, E(m))$ (see e.g. \cite[Lemma 5.6]{ES}).

Let $W$ be a rank $2$ vector bundle on a scheme $B$ and let $\pp W := \Proj( \Sym^{\bullet} W^\vee)$ with projection map $\pi : \pp W \rightarrow B$. On $\pp W$ there is a natural surjection $\pi^* W^\vee \rightarrow \O_{\pp W}(1)$, and on $B$ an isomorphism $W^\vee \cong \pi_* \O_{\pp W}(1)$. Note that if $L$ is a line bundle on $B$, then there is a natural isomorphism $\pp (W \otimes L) \cong  \pp W$ via which $\O_{\pp (W \otimes L)}(1) = \O_{\pp W}(1) \otimes \pi^* L^\vee$. By a $\pp^1$ bundle $\pp W$ on $B$, we will mean to remember the data of the rank $2$ vector bundle $W$, or equivalently a choice of relative degree $1$ line bundle $\O_{\pp W}(1)$.

Given a vector bundle $E$ on $\pp W$, we write $E(m)$ for $E \otimes \O_{\pp W}(1)^{\otimes m}$. 
Upper-semicontinuity of the ranks of cohomology of $E(m)$ on fibers determines which splitting loci can be in the closures of others. Given two splitting types $\vec{e} = (e_1, \ldots, e_r)$ with $e_1 \leq \cdots \leq e_r$ and ${\vec{e}} \nobreak\hspace{.16667em plus .08333em}' = (e_1', \ldots, e_r')$ with $e_1' \leq \cdots \leq e_r'$, we define a partial ordering  by $\vec{e}\nobreak\hspace{.16667em plus .08333em}' \leq \vec{e}$ if all partial sums $e_1' +\ldots + e_k' \leq e_1 + \ldots +e_k$. For each rank and degree, there is a unique maximal splitting type called the \textit{balanced splitting type}, characterized by the condition that $|e_i - e_j| \leq 1$.
Set theoretically, \textit{splitting degeneracy loci} are defined by
\[\overline{\Sigma}_{\vec{e}} := \bigcup_{\vec{e}\nobreak\hspace{.16667em plus .08333em}' \leq \vec{e}} \Sigma_{\vec{e}\nobreak\hspace{.16667em plus .08333em}'}.\]
\begin{warning}
The locus $\overline{\Sigma}_{\vec{e}}$ is always closed, but in general, it may not be equal to the closure of $\Sigma_{\vec{e}}$. If the splitting loci for $\vec{e}\nobreak\hspace{.16667em plus .08333em}' < \vec{e}$ have the expected codimension then the support of $\overline{\Sigma}_{\vec{e}}$ is the closure of $\Sigma_{\vec{e}}$ (see \cite[Ch.~14]{EH}). Because splitting loci on the moduli space of vector bundles on $\pp^1$ bundles occur in the correct codimension, the support of $\overline{\Sigma}_{\vec{e}}$ is the pullback of the closure of the universal stratum of splitting type $\vec{e}$, motivating this notation.
\end{warning}

\begin{remark}
Recent work of Geoffrey Smith studies degenerations of splitting types as $\pp^1$ degenerates to a tree of genus zero curves \cite{Geoff}.
\end{remark}

The cohomological conditions determining a splitting type and the theorem on cohomology and base change show that each splitting degeneracy locus $\overline{\Sigma}_{\vec{e}}(E)$ is a finite intersection of loci
\[\{b \in B: h^1(E(m)|_{\pi^{-1}(b)} ) \geq n\} = \{b \in B: \dim (R^1\pi_*E(m))_b \geq n\}.\]
The latter has a natural scheme structure as defined by the $(n-1)$st Fitting ideal of $R^1\pi_*E(m)$.
Let $\Sigmabar_{\vec{e}}(E)$ be intersection of these schemes.
 In Section \ref{sub}, we describe the tangent space to this intersection along the open stratum $\Sigma_{\vec{e}}(E) \subset \overline{\Sigma}_{\vec{e}}(E)$. As discussed there, the geometry of $\Sigmabar_{\vec{e}}(E)$ along the more unbalanced loci is more subtle. 

When $E$ has rank $2$, the possible splitting types are totally ordered with respect to $\leq$. However, in general they need not be.

\begin{Example}[Splitting type $(-2, 0, 2)$, to be revisited in Example \ref{-202}] \label{s2ex}
The diagram below describes splitting loci for a rank $3$ degree $0$ vector bundle $E$ on a $\pp^1$ bundle. The cohomological conditions determining each splitting type are listed below it. An arrow between types indicates when one splitting type is below another in the partial ordering. Recall that the expected codimension for splitting type $\vec{e}$ is $u(\vec{e}) := h^1(\pp^1, End(\O(\vec{e})))$.

\vspace{.1in}
\begin{center}
Splitting loci for a rank $3$, degree $0$ vector bundle $E$

\vspace{.1in}
\begin{tikzpicture}
\draw (0, 11.5) node {$\vec{e}$};
\draw (-4, 11.5) node {$u(\vec{e})$};
\draw (0, 10.5) node {$\mathbf{(0, 0, 0)}$};
\draw [->] (0, 8.9) -- (0, 10.1);
\draw [black!30!green] (0, 8.5) node {$\mathbf{(-1, 0, 1)}$};
\draw [->] (-1.7, 6.9) -- (-1,7.9-.2);
\draw [->] (1.7, 6.9) -- (1,7.9-.2);
\draw (0, 8) node {\small $h^0(E(-1)) \geq 1$};
\draw [red] (-1.7, 6.5) node {$\mathbf{(-1, -1, 2)}$};
\draw (-1.7,6) node {\small $h^0(E(-2)) \geq 1$};
\draw (1.7,6) node {\small $h^0(E) \geq 4$};
\draw [blue] (1.7, 6.5) node {$\mathbf{(-2, 1, 1)}$};
\draw [->] (-.8, 4.9) -- (-1.5, 5.6);
\draw [->] (.8, 4.9) -- (1.5, 5.6);
\draw [violet] (0, 4.5) node {$\mathbf{(-2, 0, 2)}$};
\draw (0, 4) node {\small $h^0(E(-2)) \geq 1, h^0(E) \geq 4$};
\draw (-4, 10.5) node {$0$};
\draw (-4, 8.5) node {$1$};
\draw (-4, 6.5) node {$4$};
\draw (-4, 4.5) node {$5$};
\end{tikzpicture}
\hspace{.3in}
\begin{tikzpicture}
\draw [->] (3-.16, 10.5) node [above] {$E$} -- (-.16+3, 9.5+.3);
 \draw (2.84+3,4.5+3+.3) arc(0:-180:3cm and .2cm);
\draw (2.84,4.5+4.5+.3) ellipse (3cm and .2cm);
\draw (-.16, 4.5+4.5+.3) -- (-.16, 4.5+3+.3);
\draw (-.16+6, 4.5+4.5+.3) -- (-.16+6, 4.5+3+.3);
\draw (6.4, 4.5+3+.75+.3) node {$\pp W$};
\draw [->] (3-.16, 7.5-.2) -- (3-.16, 6.5+.1);
\draw (6.4, 4.5) node {$B$};
\draw [black!30!green] (1.5, 4+.65+1) node {\tiny $\Sigma_{(-1,0,1)}$};
\draw (2.84,4.5) ellipse (3cm and 1.8cm);
\draw [black!30!green] (2.84,4.5) ellipse (2cm and 1cm);
\draw [black!30!green] (2.84+2,4.5) arc (0:-180: 2cm and .3cm);
\draw [dashed, black!30!green] (2.84+2,4.5) arc (0:180: 2cm and .3cm);
\draw [blue] (1.1,4.7) .. controls (2, 4.1+.4+.5) .. (2.5+.25+.125, 4.5+.125);
\draw [blue] (2.5+.25+.125, 4.5+.125) .. controls (2.5+.25+.5+.125 +.7, 4.5+.125-.65+.1) .. (4.65, 4+.3);
\draw [blue] (1.6, 4.5) node {\tiny $\Sigma_{(-2,1,1)}$};
\draw [red] (2.5, 3.6) .. controls (2.2, 4.2) .. (2.5+.25+.125, 4.5+.125);
\draw [red] (2.5+.25+.125, 4.5+.125)   .. controls (3.2, 4.8) .. (3+.15, 5.4);
\draw [red] (3.2, 3.85) node {\tiny $\Sigma_{(-1, -1, 2)}$};
\draw [violet] (2.5+.25+.125, 4.5+.125) node [circle,fill,inner sep=.75pt]{};
\draw [violet] (2.8+1, 4.6) node {\tiny $\Sigma_{(-2, 0, 2)}$};
\draw (4.3, 6.4) node {\tiny $\Sigma_{(0,0,0)}$};
\end{tikzpicture}
\end{center}

\vspace{.05in}
\noindent
Note that the $(-2, 0, 2)$ splitting locus is not determined by a single rank condition. When splitting loci occur in the correct codimension, $\overline{\Sigma}_{(-2, 0, 2)}$ is the intersection of $\overline{\Sigma}_{(-1, -1, 2)}$ and $\overline{\Sigma}_{(-2, 1, 1)}$ but the intersection is not transverse. This makes the task of computing splitting loci a delicate one in general. 
\end{Example}

A similar failure of transversality occurs in \cite{F1, F2} where Fulton studies
simultaneous rank conditions of maps of vector bundles. 
Fulton's solution involves working on a flag bundle over the base and pushing forward a class found there. 
In a similar spirit, the above failure of transversality leads us to work on certain relative Quot schemes for our $\pp^1$ bundle.

\section{Relative Quot schemes of $\pp^1$ bundles} \label{st}
Several of the results in this section generalize Str{\o}mme's work concerning vector bundles on trivial $\pp^1$ bundles \cite{S}  to the case of non-trivial $\pp^1$ bundles. Many of his proofs hold with appropriate modifications.

A key ingredient for explicit computation with splitting loci will be canonical resolutions of $R^1 \pi_*E^\vee(-m)$ for certain $m$. To motivate these resolutions, we first explain the situation on a fixed $\pp^1$.
Suppose $E$ is a globally generated vector bundle of rank $r$ and degree $k$ on $\pp^1$, so there is a canonical surjection $H^0(\pp^1, E) \otimes \O_{\pp^1} \rightarrow E$ and $h^0(\pp^1, E) = \chi(\pp^1, E) = r + k$. It follows that the kernel of this surjection is rank $k$, degree $-k$ and therefore equal to $\O_{\pp^1}(-1)^{\oplus k}$ (as a subbundle of a trival bundle, all summands of the kernel are non-positive; moreover, any trivial summand would give a linear relation among the global sections). We also have $h^0(\pp^1, E(-1)) = k$, so we can summarize the above observations with a sequence
\[0 \rightarrow H^0(\pp^1, E(-1)) \otimes \O_{\pp^1}(-1) \rightarrow H^0(\pp^1, E) \otimes \O_{\pp^1} \rightarrow E \rightarrow 0.\]
The following lemma shows that this sequence globalizes suitably over $\pp^1$ bundles.
This generalizes \cite[Prop. 1.1]{S}, which proves the case of trivial $\pp^1$ bundles.
\begin{Lemma} \label{meS}
Let $E$ be a vector bundle on a $\pp^1$ bundle $\pi: \pp W \rightarrow B$ and let $L = \det W^\vee$ on $B$. 
If $R^1\pi_*E(-1) = 0$ then there is a short exact sequence on $\pp W$
\[0 \rightarrow \pi^*(L \otimes \pi_*E(-1))(-1) \rightarrow \pi^*\pi_* E \rightarrow E \rightarrow 0.\]
\end{Lemma}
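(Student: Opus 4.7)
My plan is to first establish fiberwise surjectivity of the adjunction map $\pi^*\pi_*E \to E$, then identify its kernel as $\pi^*N \otimes \O_{\pp W}(-1)$ for some vector bundle $N$ on $B$, and finally pin down $N$ via relative Serre duality. The hypothesis $R^1\pi_*E(-1) = 0$ forces $h^1(E(-1)|_{\pi^{-1}(b)}) = 0$ on every fiber, so every summand $\O(a_i)$ of $E|_{\pi^{-1}(b)}$ satisfies $a_i \geq 0$. Thus $E$ is fiberwise globally generated and $h^1(E|_{\pi^{-1}(b)}) = 0$ as well. By cohomology and base change, both $\pi_*E$ and $\pi_*E(-1)$ are locally free with formation commuting with base change, and $\pi^*\pi_*E \to E$ is fiberwise surjective, hence surjective. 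Its kernel $K$ is a vector bundle.

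On a fiber where $E|_{\pi^{-1}(b)} = \bigoplus \O(a_i)$, evaluation has kernel $\bigoplus \O(-1)^{\oplus a_i}$, so $K(1)$ is fiberwise trivial. Applying cohomology and base change to $K(1)$ then shows that $\pi_*K(1)$ is locally free and the natural map $\pi^*\pi_*K(1) \to K(1)$ is fiberwise an isomorphism between vector bundles of the same rank, hence an isomorphism. Setting $N := \pi_*K(1)$ gives $K \cong \pi^*N \otimes \O_{\pp W}(-1)$.

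To identify $N$, I would twist the resulting sequence $0 \to \pi^*N \otimes \O_{\pp W}(-1) \to \pi^*\pi_*E \to E \to 0$ by $\O_{\pp W}(-1)$ and apply $\pi_*$. Using $\pi_*\O(-1) = R^1\pi_*\O(-1) = 0$, the projection formula, and $R^1\pi_*E(-1) = 0$, the long exact sequence collapses to
\[\pi_*E(-1) \cong R^1\pi_*(\pi^*N \otimes \O_{\pp W}(-2)) \cong N \otimes R^1\pi_*\O_{\pp W}(-2).\]
Taking determinants of the relative Euler sequence $0 \to \Omega_{\pp W/B}(1) \to \pi^*W^\vee \to \O_{\pp W}(1) \to 0$ yields $\omega_{\pp W/B} \cong \pi^*L \otimes \O_{\pp W}(-2)$, so relative Serre duality gives
\[R^1\pi_*\O_{\pp W}(-2) \cong (\pi_*(\O_{\pp W}(2) \otimes \omega_{\pp W/B}))^\vee \cong (\pi_*\pi^*L)^\vee \cong L^\vee.\]
Tensoring the displayed isomorphism by $L$ yields $N \cong L \otimes \pi_*E(-1)$. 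The main delicacy is this last step: correctly tracking the convention $L = \det W^\vee$ through the Euler sequence and relative Serre duality so that $L$, rather than $L^\vee$, appears in the final formula.
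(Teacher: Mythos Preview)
Your argument is correct and complete for the lemma as stated, but the paper proceeds quite differently. The paper resolves the diagonal $\Delta \subset \pp W \times_B \pp W$: writing down a section of $q^*\pi^*\det W \otimes p^*\O_{\pp W}(1) \otimes q^*\O_{\pp W}(1)$ cutting out $\Delta$ gives a Koszul/Beilinson-type sequence $0 \to q^*\pi^*L \otimes p^*\O_{\pp W}(-1) \otimes q^*\O_{\pp W}(-1) \to \O_X \to \O_\Delta \to 0$; tensoring with $p^*E$ and applying $q_*$ (with the projection formula and base change) produces the desired sequence directly, with the map $\pi^*(L\otimes\pi_*E(-1))(-1)\to\pi^*\pi_*E$ appearing canonically rather than via an \emph{a posteriori} identification of the kernel. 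Your route---fiberwise surjectivity of adjunction, recognizing the kernel as a twist of a pullback, then naming $N$ by Serre duality---is more elementary and avoids the fiber product altogether. What the paper's approach buys is that, without the hypothesis $R^1\pi_*E(-1)=0$, one still obtains a five-term exact sequence (their equation~\eqref{les}) with canonically defined maps; this extra structure is used later in the paper (e.g.\ in the proof of Lemma~\ref{ind}, in Section~\ref{sub}, and in constructing the map $j_m$ in Theorem~\ref{em}). Your proof establishes only an isomorphism $N\cong L\otimes\pi_*E(-1)$, which suffices here but would need supplementing if one wanted those later natural maps.
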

\begin{proof}
Str{\o}mme's proof generalizes with suitable care. Let $X$ be the fiber product of $\pp W \rightarrow B$ with itself and consider the diagonal $\Delta \subset X$:
\begin{center}
\begin{tikzcd}
& \Delta\arrow[r,symbol=\subset] &[-25pt] X \arrow{r}{p} \arrow{d}[swap]{q} &\pp W \arrow{d}{\pi} \\
& & \pp W \arrow{r}[swap]{\pi} & B.
\end{tikzcd}
\end{center}
Suppose $W$ is trivialized on some open by sections $e_0$ and $e_1$. Let $x_i = p^* e_i^\vee$ and $y_i = q^* e_i^\vee$ be the pullbacks of the corresponding dual sections of $\O_{\pp W}(1)$. Then $\Delta$ is cut out locally by the vanishing of $(x_0 e_0 + x_1 e_1) \wedge (y_0 e_0 + y_1 e_1) = (x_0 y_1 - x_1 y_0) e_0 \wedge e_1$. This globalizes to realize $\Delta$ as the vanishing of a section of $q^* \pi^* \det W \otimes p^*\O_{\pp W}(1) \otimes q^* \O_{\pp W}(1)$. (In fact, this is the unique functorial construction of a line bundle of the correct degrees on fibers of $p$ and $q$ which is unaffected by twisting $W$ by a line bundle on the base.) In particular, we have an exact sequence
\[0 \rightarrow q^* \pi^* L \otimes p^* \O_{\pp W}(-1) \otimes q^* \O_{\pp W}(-1) \rightarrow \O_X \rightarrow \O_\Delta \rightarrow 0.\]
Following Str{\o}mme, we tensor with $p^* E$, apply $q_*$ and use the projection formula to obtain a long exact sequence on $\pp W$:
\begin{align} \label{les}
0 &\rightarrow \pi^*L \otimes q_* p^* E(-1) \otimes \O_{\pp W}(-1) \rightarrow q_* p^* E \rightarrow E \\
&\rightarrow \pi^*L \otimes R^1 q_* p^* E(-1) \otimes \O_{\pp W}(-1) \rightarrow R^1q_* p^* E \rightarrow 0. \notag
\end{align}
By the theorem on cohomology and base change, $R^1q_* p^* E(-1) = \pi^* R^1\pi_* E(-1) = 0$, so the first row is exact.
Similarly, $q_* p^* E(-1) = \pi_* \pi^* E(-1)$ and $q_* p^* E = \pi^* \pi_* E$, producing the desired sequence.
\end{proof}

We will apply Lemma \ref{meS} to suitable twists of vector bundles $E$.
For an integer $m$, the condition $R^1\pi_* E(m-1) = 0$ is equivalent to the condition that the restriction of $E(m)$ to each fiber is globally generated, which in turn is equivalent to saying all summands of $E(m)$ restricted to any fiber are non-negative. In this case, taking the dual of the sequence in Lemma \ref{meS} expresses $E^\vee(-m)$ as the kernel of a map between twists of pullbacks of vector bundles from the base:
\begin{equation} \label{onP}
0 \rightarrow E^\vee(-m) \rightarrow \pi^*(\pi_*E(m))^\vee \xrightarrow{\psi} \pi^* (L \otimes \pi_*E(m-1))^\vee(1) \rightarrow 0.
\end{equation}
Pushing forward, and recalling that $\pi_* \O_{\pp W}(1) \cong W^\vee$, we obtain
\begin{equation} \label{onB}
0 \rightarrow \pi_*E^\vee(-m) \rightarrow (\pi_* E(m))^\vee \xrightarrow{\pi_* \psi} \pi^* (L \otimes \pi_* E(m-1))^\vee \otimes W^\vee \rightarrow R^1 \pi_* E^\vee(-m) \rightarrow 0.
\end{equation}
Sections \ref{classes} and \ref{gen} take advantage of these sequences to compute classes of splitting degeneracy loci.

The proof of Lemma \ref{meS} also relates push forwards of various twists of $E$ in the $K$-theory of $B$. Let $R\pi_* E$ denote the derived push forward $[\pi_* E] - [R^1\pi_* E]$ in $K(B)$. In addition, we define the following class in $K$ theory depending only on $W$
\[\Theta(m) := \sum_{i = 0}^{\lfloor m/2 \rfloor}(-1)^i {m-i \choose i} W^{\vee \otimes m-2i} \otimes L^{\vee \otimes m-i} \in K(B).\]
Note that $\rank \Theta(m) = m+1$, where $\rank$ is understood to extend linearly to $K$-theory.
\begin{Lemma} \label{ind}
If $E$ is a vector bundle on $\pi: \pp W \rightarrow B$, then
\[R\pi_* E(-1) = R\pi_*E \otimes W^\vee \otimes L^\vee - R\pi_* E(1) \otimes L^\vee\]
in $K(B)$. More generally, by induction it follows that
\[R\pi_*E(-1) = (\Theta(m+1) - \Theta(m) \otimes W^\vee \otimes L^\vee) \otimes R\pi_* E(m) + \Theta(m) \otimes R\pi_* E(m-1).\]
\end{Lemma}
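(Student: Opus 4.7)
The plan is to derive the first identity from the tautological short exact sequence on $\pp W$, then iterate it, with the binomial coefficients in $\Theta(m)$ forced by a three-term recursion that I verify via Pascal's identity.

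For the first identity, I would start from the tautological surjection $\pi^*W^\vee \twoheadrightarrow \O_{\pp W}(1)$ coming from $\pp W = \Proj(\Sym^\bullet W^\vee)$. Its kernel is a line bundle whose determinant is identified by taking determinants of the sequence: $\det \pi^*W^\vee = \pi^*L$ and the quotient is $\O_{\pp W}(1)$, so the kernel is $\pi^*L \otimes \O_{\pp W}(-1)$. This yields
\[0 \to \pi^*L(-1) \to \pi^* W^\vee \to \O_{\pp W}(1) \to 0.\]
Tensoring with $E$, applying $R\pi_*$, and invoking the projection formula gives in $K(B)$
\[L \cdot R\pi_* E(-1) - W^\vee \cdot R\pi_* E + R\pi_* E(1) = 0,\]
and tensoring with $L^\vee$ produces the first claimed identity.

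For the general formula I induct on $m$. Applying the first identity to $E(m)$ in place of $E$ gives the shift relation
\[R\pi_* E(m-1) = W^\vee L^\vee \cdot R\pi_* E(m) - L^\vee \cdot R\pi_* E(m+1),\]
which is precisely the target formula at $m=1$ (using $\Theta(0)=1$, $\Theta(1) = W^\vee L^\vee$, and $\Theta(2) = W^{\vee\otimes 2} L^{\vee\otimes 2} - L^\vee$). Assuming the formula holds at level $m$, I substitute the shift relation into the $\Theta(m) \otimes R\pi_* E(m-1)$ term. The coefficient of $R\pi_* E(m)$ telescopes from $(\Theta(m+1) - \Theta(m) W^\vee L^\vee) + \Theta(m) W^\vee L^\vee$ to $\Theta(m+1)$, while the coefficient of $R\pi_* E(m+1)$ becomes $-L^\vee \cdot \Theta(m)$. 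Matching this to the desired level-$(m+1)$ expression reduces the induction to verifying the recursion
\[\Theta(m+2) = W^\vee L^\vee \cdot \Theta(m+1) - L^\vee \cdot \Theta(m).\]

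The remaining work is a combinatorial verification of this recursion, which is where the main bookkeeping lies (though nothing genuinely difficult arises). Extracting the coefficient of $W^{\vee\otimes(m+2-2j)} \otimes L^{\vee\otimes(m+2-j)}$ and dividing by $(-1)^j$, the identity reduces to
\[\binom{m+2-j}{j} = \binom{m+1-j}{j} + \binom{m+1-j}{j-1},\]
which is Pascal's identity; the boundary case $j=0$ is handled by the convention $\binom{m+1}{-1}=0$. This closes the induction. The main obstacle is really just organizing the index shifts in $\Theta(m)$ correctly so that the recursion is visible; once isolated, the identity is immediate from Pascal.
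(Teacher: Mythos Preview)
Your argument is correct. The route differs from the paper in two small but genuine ways. For the first identity, the paper obtains the relation by tensoring the long exact sequence \eqref{les} (arising from the diagonal $\Delta \subset \pp W \times_B \pp W$ in the proof of Lemma~\ref{meS}) by $\O_{\pp W}(1)$ and pushing forward; you instead use the relative Euler sequence $0 \to \pi^*L(-1) \to \pi^*W^\vee \to \O_{\pp W}(1) \to 0$ directly on $\pp W$. Your approach is more self-contained and avoids the fiber-product machinery, at the cost of not tying the lemma back to the sequence already established in Lemma~\ref{meS}. For the general formula, both arguments recognize the same three-term recurrence $b_{m+1} = (W^\vee L^\vee) b_m - L^\vee b_{m-1}$; the paper solves it by packaging the $b_i$ into a generating function and reading off $\Theta(m)$ as the degree-$m$ coefficient of $\frac{1}{1 - xt - yt^2}$, while you verify the closed form by induction and Pascal's identity. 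These are standard equivalent techniques; the generating-function version explains \emph{where} the formula for $\Theta(m)$ comes from, whereas your inductive check is shorter once the formula is stated.
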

\begin{remark}
The advantage of the second expression is that for suitably large $m$, $R\pi_* E(m) = \pi_*E(m)$ and $R\pi_* E(m-1) = \pi_*E(m-1)$ are vector bundles on $B$.
\end{remark}
\begin{proof}
The first statement follows from tensoring \eqref{les} by $\O_{\pp W}(1)$ and pushing forward to $B$. 
Setting $b_i = R\pi_* E(m-i)$, $x = W^\vee \otimes L^\vee,$ and $y = -L^\vee$, we obtain a two-term recurrence relation of the form $b_{m+1} = xb_{m} + yb_{m-1}$ for all $m$.
Packaging these in a generating function $f(t) = \sum_{i\geq 0} b_i t^i$, we see 
\[f(t) = \frac{b_0 + b_1 t - xt b_0}{1-xt - yt^2} \quad \Rightarrow \quad b_{m+1} = \theta(m) b_0 + \theta(m-1)(b_1 - xtb_0),\]
where $\theta(m)$ denotes the coefficient of degree $m$ in the rational function $\frac{1}{1-xt - yt^2}$. Solving for this coefficient recovers our definition of $\Theta(m)$.
\end{proof}

In \cite{S}, Str{\o}mme describes an embedding of the Quot scheme of a trivial vector bundle on $\pp^1$ into a product of Grassmannians. We require a generalization to relative Quot schemes over $\pp^1$ bundles. Let $\mathcal{F}$ be a vector bundle on $U$ and $\pi: \pp W \rightarrow U$ a $\pp^1$ bundle. Given some Hilbert polynomial,
 let $\Quot_{\pi^*\F}$ denote the relative Quot scheme of $\pi^* \F$ over $\pp W \rightarrow U$. The scheme $\Quot_{\pi^*\F}$ can be thought of as a fiber bundle over $U$ where the fiber over $b \in U$ is Str{\o}mme's corresponding Quot scheme of the trivial bundle $\F_b \otimes \O_{\pp W_b}$ on $\pp W_b$. Let us label maps in the following commutative diagram:
\begin{equation} \label{di}
\begin{tikzcd}
&\Quot_{\pi^*\F} \times_{U} \pp W \arrow{d}[swap]{p} \arrow{r}{q} &\pp W \arrow{d}{\pi} \\
&\Quot_{\pi^*\F} \arrow{r}[swap]{\gamma} &U.
\end{tikzcd}
\end{equation}
Then $\Quot_{\pi^*\F} \times_U \pp W$ is equipped with a tautological sequence
\begin{equation} \label{ts}
0 \rightarrow \mathcal{S} \rightarrow q^*\pi^* \mathcal{F} \rightarrow \mathcal{Q} \rightarrow 0,
\end{equation}
where $\mathcal{Q}$ is flat over $\Quot_{\pi^*\F}$. 
Let $-d$ be the degree of $\mathcal{S}$ restricted to a fiber of $p$. For each $m \geq d-1$, tensoring \eqref{ts} with $q^*\O_{\pp W}(m)$ and pushing forward to $\Quot_{\pi^*\F}$ gives rise to a natural injection
\begin{equation} \label{maps}
p_* \mathcal{S}(m) \hookrightarrow p_* (q^*(\pi^*\mathcal{F})(m)) = \gamma^*(\F \otimes \Sym^{m} W^\vee).
\end{equation}
This induces a map of $\Quot_{\pi^*\F}$ to a corresponding Grassmann bundle over $U$. The following generalizes \cite[Thm. 4.1]{S}.

\begin{theorem} \label{em}
Let $\Quot_{\pi^*\F}$ be the relative Quot scheme of $\pi^*\mathcal{F}$ over $\pi: \pp W \rightarrow U$ for some Hilbert polynomial and let $p$ be as in \eqref{di}. Let $-d$ be the relative degree of the tautological subbundle $\mathcal{S}$ and let $r_{d-1} = \rank p_* \S(d-1)$ and $r_d = \rank p_* \S(d)$. There is an embedding
\begin{center}
\begin{tikzcd}
&\Quot_{\pi^*\F} \arrow{r}{\iota} \arrow{dr}[swap]{\gamma} &G(r_{d-1}, \F \otimes Sym^{d-1}W^\vee) \times G(r_d, \F \otimes Sym^d W^\vee) \arrow{d}{\rho} \\
& & U
\end{tikzcd}
\end{center}
 such that the tautological subbundles $S_{d-1}$ and $S_d$ on the Grassmann bundles on the right restrict to $p_*\mathcal{S}(d-1)$ and $p_*\mathcal{S}(d)$. Moreover, the image of $\Quot_{\pi^*\F}$ has class 
 \[[\Quot_{\pi^*\F}] = c_{top}(S_{d-1}^\vee \otimes Q_d \otimes \rho^*(W^\vee \otimes L^\vee)),\]
where $Q_d$ denotes the tautological quotient bundle on the second factor Grassmann bundle.
\end{theorem}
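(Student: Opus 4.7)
The strategy is to follow Str{\o}mme's proof of \cite[Thm.~4.1]{S}, substituting the relative constructions on $\pp W$ furnished by Lemma \ref{meS} wherever Str{\o}mme uses the structure sheaf of the trivial $\pp^1$. The proof proceeds in three stages: construct $\iota$, identify its scheme-theoretic image with the zero locus of a natural map of vector bundles on the product of Grassmann bundles, and read off the cycle class.

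First I would build $\iota$. Since $\Q$ is flat over $\Quot_{\pi^*\F}$ and each fiber of $\S(m)$ for $m \geq d-1$ is a sum of non-negative line bundles on the corresponding $\pp^1$-fiber of $p$, cohomology and base change gives $R^1 p_* \S(m) = 0$ and exhibits $p_*\S(d-1)$ and $p_*\S(d)$ as vector bundles of the stated ranks. Pushing forward the tautological inclusion in \eqref{ts} twisted by $q^*\O_{\pp W}(m)$ produces the injections in \eqref{maps}, which determine morphisms to the two Grassmann bundles and thus $\iota$.

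The heart of the proof is to show that $\iota$ is a closed embedding whose scheme-theoretic image equals the zero locus $Z(\mu)$ of the natural composition on $G = G(r_{d-1},\F\otimes \Sym^{d-1}W^\vee)\times_U G(r_d,\F\otimes \Sym^d W^\vee)$
\[\mu\colon S_{d-1}\otimes \rho^*W^\vee \hookrightarrow \rho^*(\F\otimes \Sym^{d-1}W^\vee)\otimes \rho^*W^\vee \to \rho^*(\F\otimes \Sym^d W^\vee)\twoheadrightarrow Q_d,\]
built from the multiplication $\Sym^{d-1}W^\vee\otimes W^\vee\to\Sym^d W^\vee$. The inclusion $\iota(\Quot_{\pi^*\F})\subseteq Z(\mu)$ is immediate because the image of $\S(d-1)\otimes q^*\pi^*W^\vee$ already lies in $\S(d)$. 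The reverse inclusion, together with the scheme-theoretic equality, is the main obstacle: given a pair $(S_{d-1},S_d)$ satisfying $\mu=0$, I would reconstruct a tautological subsheaf on $\pp W\times_U Z(\mu)$ via Lemma \ref{meS}. The vanishing of $\mu$ gives a compatible multiplication $S_{d-1}\otimes \rho^*W^\vee\to S_d$, hence a map $p^*(\rho^*L\otimes S_{d-1})\otimes q^*\O_{\pp W}(-1)\to p^*S_d$ whose cokernel, viewed as a twist of $\S(d)$, defines the desired subsheaf of $q^*\pi^*\F$; checking that the $p$-pushforwards of its twists recover $S_{d-1}$ and $S_d$ and that its quotient is flat over $Z(\mu)$ with the prescribed Hilbert polynomial parallels Str{\o}mme's argument, now carrying the extra twists dictated by Lemma \ref{meS}.

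Finally, $\mu$ is a section of the vector bundle $\mathcal{H}:= S_{d-1}^\vee\otimes \rho^*W\otimes Q_d$, and because $W$ has rank $2$ the canonical isomorphism $W\cong W^\vee\otimes L^\vee$ rewrites this as $\mathcal{H}\cong S_{d-1}^\vee\otimes Q_d\otimes \rho^*(W^\vee\otimes L^\vee)$. The scheme-theoretic identification $\iota(\Quot_{\pi^*\F})=Z(\mu)$ together with a dimension count (matching the deformation-theoretic expected dimension of $\Quot_{\pi^*\F}$ with $\dim G - \rank\mathcal{H}$) shows that $\mu$ is a regular section, so its zero cycle is $c_{top}(\mathcal{H})$, yielding the stated formula.
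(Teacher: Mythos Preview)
Your proposal is correct and follows essentially the same route as the paper: both reduce the embedding and the identification of the image to Str{\o}mme's argument, replacing his $2$-dimensional vector space $H$ by the appropriate rank-$2$ bundle coming from $W$, and both realize $\iota(\Quot_{\pi^*\F})$ as the zero locus of the resulting natural map $S_{d-1}\to Q_d\otimes(\text{rank-}2\text{ bundle})$. The only cosmetic difference is that the paper builds this map directly from Lemma~\ref{meS} (so the factor $W^\vee\otimes L^\vee$ is present from the outset), whereas you use the symmetric-power multiplication $\Sym^{d-1}W^\vee\otimes W^\vee\to\Sym^d W^\vee$ and then invoke the self-duality $W\cong W^\vee\otimes L^\vee$ at the end; under that isomorphism the two constructions agree.
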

\begin{proof}
To see the map is an embedding, it suffices to check on fibers of $\Quot_{\pi^*\F} \rightarrow U$, which reduces us to Str{\o}mme's setting.
To determine the image, Str{\o}mme uses a relationship between the maps in \eqref{maps} for adjacent twists.
Tensoring the sequence in Lemma \ref{meS} by $\O_{\pp W}(1)$ and pushing forward gives rise to a natural map $\pi_* E(-1) \rightarrow \pi_*E \otimes W^\vee \otimes L^\vee$. In the case that $E = \pi^* \F \otimes \O_{\pp W}(m)$ for some vector bundle $\F$ on $U$ and $m \geq 1$, this gives a map 
\[\mathcal{F} \otimes \Sym^{m-1}W^\vee  \rightarrow \mathcal{F} \otimes \Sym^{m} W^\vee \otimes W^\vee \otimes L^\vee.\]
The only modification needed in Str{\o}mme's proof is that his natural map $j_m$ on page 262 should be replaced with the above.
This results in replacing Str{\o}mme's $2$-dimensional vector space $H$ by the rank $2$ vector bundle $\rho^*(W^\vee \otimes L^\vee)$ throughout the remainder of his Section 4. His proof then shows $\Quot_{\pi^*\F}$ is the zero locus of a natural map $S_{d-1} \rightarrow Q_d \otimes \rho^*(W^\vee \otimes L^\vee)$ on the product of Grassmann bundles, proving the formula for its class.
\end{proof}

\section{The tangent space to splitting loci} \label{sub}
In this section, we describe the tangent spaces to splitting degeneracy schemes and show they satisfy a certain minimality property.
We also provide an alternative description of the tangent space that will appear in Section \ref{gen}.
Recall that, as a scheme, we have defined
\[\Sigmabar_{\vec{e}}(E)=\bigcap_{m}  \{b \in B: \dim (R^1\pi_*E(m))_b \geq h^1(\O(\vec{e})(m))\},\]
where the schemes in the intersection on the right are defined by the appropriate Fitting ideals of $R^1\pi_*E(m)$. 

Let $T = \spec k[\epsilon]/(\epsilon^2)$. For $b \in B$, let $\Mor_b(T, B)$ denote the space of morphisms $T \rightarrow B$ sending the reduced point $0 =\spec k \subset T$ to $b$. Given a vector bundle $\E$ on $\pp^1 \times T$, we write $\E_0$ for the restriction to $\pp^1 \times 0$.
 Given any $v: T \rightarrow B$, we have a fibered diagram
 \begin{center}
 \begin{tikzcd}
 {v'}^*E \arrow{r} \arrow{d} & E \arrow{d} \\
 T \times \pp^1 \arrow{d}[swap]{\pi'} \arrow{r}{v'} & \pp W \arrow{d}{\pi} \\
 T \arrow{r}[swap]{v} & B.
 \end{tikzcd}
 \end{center} 
 There is a natural map on tangent spaces
 \[\delta_{E,b}: T_bB = \Mor_b(T, B) \rightarrow \Def(E|_{\pi^{-1}(b)}) = H^1(End(E|_{\pi^{-1}(b)}))\] 
  that sends a map $v : T \rightarrow B$ to the induced first order deformation ${v'}^*E$. The tangent space to any scheme structure on a splitting locus of $E$ contains $\ker (\delta_{E,b})$.
  We demonstrate that our schemes satisfy the following minimality property.
  
  \begin{Lemma} \label{dlem}
For $b \in \Sigma_{\vec{e}}(E) \subset \Sigmabar_{\vec{e}}(E)$, the tangent space is
\[
  T_b\Sigmabar_{\vec{e}}(E) = \ker (\delta_{E,b}). 
\]
  \end{Lemma}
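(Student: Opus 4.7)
The plan is to compute the tangent space term-by-term using the defining Fitting ideals, translate the resulting conditions via cohomology and base change into statements about the deformation $v'^*E$, and finally identify them with the vanishing of the Kodaira--Spencer class $\delta_{E,b}(v)$.

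Writing $F_m := R^1\pi_*E(m)$ and $n_m := h^1(\O(\vec{e})(m))$, we have by definition
\[\Sigmabar_{\vec{e}}(E) = \bigcap_{m \in \zz} V\bigl(\mathrm{Fit}_{n_m - 1}(F_m)\bigr).\]
Because for closed subschemes $X, Y \subseteq B$ the identity $T_b(X \cap Y) = T_b X \cap T_b Y$ holds inside $T_b B$, it suffices to determine the tangent space at $b$ of each Fitting scheme. A tangent vector $v \colon T = \spec k[\epsilon]/(\epsilon^2) \to B$ lies in $T_b V(\mathrm{Fit}_{n_m - 1}(F_m))$ precisely when $\mathrm{Fit}_{n_m - 1}(v^*F_m) = 0$ in $\O_T$, since Fitting ideals commute with base change. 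Since $b \in \Sigma_{\vec{e}}(E)$, the reduction $v^*F_m \otimes_{\O_T} k \cong F_m|_b$ has dimension exactly $n_m$; by Nakayama one may choose a surjection $\O_T^{n_m} \twoheadrightarrow v^*F_m$ whose kernel lies in $\epsilon\,\O_T^{n_m}$, and a direct computation with the resulting presentation matrix shows $\mathrm{Fit}_{n_m - 1}(v^*F_m) = 0$ iff this kernel vanishes, i.e.\ iff $v^*F_m$ is free of rank $n_m$ over $\O_T$.

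Next I would invoke cohomology and base change in top relative degree, which holds unconditionally for $\pp^1$-fibers, to identify $v^*F_m$ with $R^1\pi'_*(v'^*E(m))$, where $\pi'$ and $v'$ come from the fibered diagram defining the pullback. The deformation $v'^*E$ of $\O(\vec{e})$ fits into a short exact sequence
\[0 \to \O(\vec{e}) \xrightarrow{\epsilon} v'^*E \to \O(\vec{e}) \to 0\]
on $\pp^1 \times T$; twisting by $\O(m)$ and applying $\pi'_*$ produces a four-term exact sequence of $\O_T$-modules whose connecting map $H^0(\O(\vec{e})(m)) \to H^1(\O(\vec{e})(m))$ is, by the standard identification of extension classes with boundary homomorphisms, cup product with $\xi := \delta_{E,b}(v) \in H^1(\mathrm{End}(\O(\vec{e})))$. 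Using the classification of finitely generated $\O_T$-modules as $\O_T^a \oplus k^b$, a short numerical comparison (with $a + b = n_m$ and $\dim_k = 2a + b$) shows that $R^1\pi'_*(v'^*E(m))$ is free of rank $n_m$ iff $\cup\,\xi$ vanishes at this $m$.

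Finally, I would show $\cup\,\xi = 0$ for every $m$ iff $\xi = 0$. Decomposing $\xi = (\xi_{ji}) \in \bigoplus_{i,j} H^1(\O(e_j - e_i))$ and specializing to $m = -e_i$, the restriction of $\cup\,\xi$ to the summand $H^0(\O) = H^0(\O(e_i - e_i)) \subseteq H^0(\O(\vec{e})(-e_i))$ sends $1 \mapsto (\xi_{ji})_j$, so vanishing at $m = -e_i$ forces the entire $i$-th column of $\xi$ to be zero; letting $i$ range over all indices yields $\xi = 0$. The main obstacle I anticipate is carrying out the Fitting-ideal-versus-freeness equivalence cleanly and confirming that the connecting map really is cup product with $\xi$ at the level of $\O_T$-modules; both are ultimately routine but supply the bridges between the scheme-theoretic, homological, and deformation-theoretic descriptions that the lemma equates.
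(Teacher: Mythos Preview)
Your proof is correct and shares its opening move with the paper---both reduce to showing that $v\in T_b\Sigmabar_{\vec{e}}(E)$ if and only if each $R^1\pi'_*(v'^*E(m))$ is free of rank $n_m$ over $\O_T$---but the two arguments diverge on the harder implication. The paper argues by induction on the rank of $E$: after a twist it uses the canonical resolution from Lemma~\ref{meS} to produce a surjection $v'^*E \to \alpha^*\O(-1)^{\oplus i}$ onto the lowest-degree summands, applies the inductive hypothesis to conclude that the kernel is the trivial deformation $\alpha^*\O(\vec{a})$, and then checks that the resulting extension splits. You instead pass directly through the Kodaira--Spencer class: the long exact sequence of the extension $0\to\O(\vec{e})\to v'^*E\to\O(\vec{e})\to 0$ together with the structure theorem for $\O_T$-modules converts freeness of $R^1\pi'_*(v'^*E(m))$ into vanishing of the connecting map $\cup\,\xi$ at $m$, and the choice $m=-e_i$ kills the $i$-th column of $\xi$. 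Your route is shorter and makes the link between the Fitting-ideal scheme structure and the deformation class completely transparent; the paper's inductive peeling has the compensating virtue of being constructive and of running parallel to the alternate description of the tangent space (Lemma~\ref{other}) that is invoked later in the algorithm of Section~\ref{gen}.
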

 \begin{remark}
 In \cite[Conj. 5.1]{ES}, Eisenbud-Shreyer conjecture that  $\Sigmabar_{\vec{e}}(E)$ is reduced in the case where $B$ is a versal deformation space of $\O^{\oplus r-1} \oplus \O(d)$.  In this universal setting, $\delta_{E, b}$ is surjective for all $b \in \Sigma_{\vec{e}}(E)$, so Lemma \ref{dlem} shows that $\Sigmabar_{\vec{e}}(E)$ is smooth along $\Sigma_{\vec{e}}(E)$. However, this does not rule out the possibility of embedded points along the more unbalanced locus $\Sigmabar_{\vec{e}}(E) \backslash \Sigma_{\vec{e}}(E)$,
 so \cite[Conj. 5.1]{ES} remains an open conjecture. For the purposes of computing classes of degeneracy loci, our assumption that the more unbalanced locus $\Sigmabar_{\vec{e}}(E) \backslash \Sigma_{\vec{e}}(E)$ occurs in higher codimension means this subtlety does not affect the class.
 \end{remark}
 
First let us identify the tangent space to $\Sigmabar_{\vec{e}}(E)$.

\begin{Lemma}
Let $b \in \Sigma_{\e}(E) \subset \Sigmabar_{\e}(E)$. 
The tangent space to $\Sigmabar_{\vec{e}}(E)$ is
\[T_b\Sigmabar_{\vec{e}}(E) = \{v \in \Mor_b(T, B): R^1\pi'_*({v'}^*E)(m) \text{ is free of rank } h^1(\O(\e)(m)) \ \forall \ m\}.\]
\end{Lemma}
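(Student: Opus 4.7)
The plan is to unwind the scheme-theoretic definition of $\Sigmabar_{\vec{e}}(E)$ as an intersection of Fitting loci and translate the tangent condition into a statement about the pullback of $R^1\pi_*E(m)$ to the dual numbers. By construction, $v\in\Mor_b(T,B)$ lies in $T_b\Sigmabar_{\vec{e}}(E)$ if and only if, for every $m$, the pullback $v^*F_{n_m-1}\bigl(R^1\pi_*E(m)\bigr)=0$, where $n_m:=h^1(\O(\vec{e})(m))$. Since Fitting ideals commute with arbitrary base change, this equals $F_{n_m-1}\bigl(v^*R^1\pi_*E(m)\bigr)$.

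Next I would apply cohomology and base change. Because each fiber of $\pi$ is a $\pp^1$, we have $R^2\pi_*E(m)=0$ identically, so by Grauert's theorem $R^1\pi_*E(m)$ commutes with every base change. Hence $v^*R^1\pi_*E(m)\cong R^1\pi'_*({v'}^*E)(m)$ as $k[\epsilon]/(\epsilon^2)$-modules, and the Fitting condition becomes $F_{n_m-1}\bigl(R^1\pi'_*({v'}^*E)(m)\bigr)=0$ for every $m$.

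The key elementary input is the following linear-algebra lemma over $R=k[\epsilon]/(\epsilon^2)$: for any finitely generated $R$-module $M$ with $\dim_k M/\epsilon M = n$, the Fitting ideal $F_{n-1}(M)$ vanishes if and only if $M$ is free of rank $n$. To verify this, decompose $M\cong k^a\oplus R^b$ with $a+b=n$; then $M$ admits a minimal presentation $R^a\to R^n\to M\to 0$ whose matrix has $\epsilon$ on the first $a$ diagonal entries and $0$ elsewhere. Since $F_{n-1}(M)$ is generated by the $1\times 1$ minors of this matrix, it equals $(\epsilon)$ when $a>0$ and $(0)$ when $a=0$.

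Applying the lemma to $M=R^1\pi'_*({v'}^*E)(m)$ then closes the argument. The hypothesis $b\in\Sigma_{\vec{e}}(E)$ combined with the fiberwise base-change isomorphism yields $\dim_k M/\epsilon M = h^1(E(m)|_{\pi^{-1}(b)}) = h^1(\O(\vec{e})(m))=n_m$, so the lemma says $F_{n_m-1}(M)=0$ if and only if $R^1\pi'_*({v'}^*E)(m)$ is free of rank $n_m$. Intersecting this condition over all $m$ produces the stated description of the tangent space. The only genuine technical point is the little matrix calculation identifying Fitting ideals with freeness over the dual numbers; the cohomology and base change and Fitting-ideal functoriality steps are standard bookkeeping.
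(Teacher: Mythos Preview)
Your argument is correct and follows essentially the same approach as the paper: both reduce to showing that the Fitting condition on $v^*R^1\pi_*E(m)$ over $k[\epsilon]/(\epsilon^2)$ is equivalent to freeness of the correct rank, and both invoke cohomology and base change to identify $v^*R^1\pi_*E(m)$ with $R^1\pi'_*({v'}^*E)(m)$. The paper pulls back a locally free resolution from $B$ and argues directly that a lower-size minor is a unit at the closed point, whereas you use the structure theorem for modules over the dual numbers to reach the same conclusion; your version is a bit more explicit about the converse direction, but the content is the same.
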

\begin{proof}
Let $F \rightarrow G \rightarrow R^1\pi_*E(m)$ be a locally free resolution on $B$. If $v: T \rightarrow \Sigmabar_{\vec{e}}(E)$, then $v^*F \rightarrow v^*G \rightarrow v^*R^1\pi_*E(m)$ is a free resolution and the appropriate minors of $v^*F \rightarrow v^*G$ vanish on all of $T$. Some minor one size smaller is nonzero at the reduced point, hence a unit. Thus, the cokernel $v^*R^1\pi_*E(m)$ is free of the correct rank. Cohomology and base change shows that $v^*R^1\pi_*E(m) = R^1\pi'_*{v'}^*E(m)$.
\end{proof}

Lemma \ref{dlem} is now implied by the following.
\begin{Lemma}
Let $E$ be a vector bundle on $\pp^1 \times T$ with $E_0 \cong \O(\vec{e})$. Label the projections
\begin{center}
\begin{tikzcd}
& \arrow{dl}[swap]{\alpha} \pp^1 \times T \arrow{dr}{\pi}& \\
\pp^1 & & T
\end{tikzcd}
\end{center}
 Suppose furthermore that $R^1\pi_*E(-m)$ is locally free of rank $h^1(\O(\vec{e})(-m))$ for all $m \geq \min\{e_i\}$. Then $E \cong \alpha^*\O(\vec{e})$ is the trivial deformation.
\end{Lemma}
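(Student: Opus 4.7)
The plan is to identify $E$ with its classifying extension class $\xi \in H^1(\pp^1, End(\O(\vec{e}))) = \bigoplus_{i,j} H^1(\O(e_j - e_i))$ and to show that the hypothesis forces $\xi = 0$, so that $E \cong \alpha^*\O(\vec{e})$. Writing $\xi = (\xi_{ij})$ with $\xi_{ij} \in H^1(\O(e_j - e_i))$, the strategy is to isolate each component by a well-chosen twist.

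To convert the hypothesis into a cohomological statement, consider the short exact sequence on $\pp^1 \times T$
\[0 \to \alpha^*\O(\vec{e})(-m) \xrightarrow{\cdot \epsilon} E(-m) \to \alpha^*\O(\vec{e})(-m) \to 0\]
and apply $\pi_*$. A dimension count on the resulting six-term exact sequence of $T$-modules shows that $R^1\pi_*E(-m)$ has $k$-dimension $2 h^1(\O(\vec{e})(-m))$, and hence is free of rank $h^1(\O(\vec{e})(-m))$ over $T$, if and only if the connecting map
\[\delta_m: H^0(\O(\vec{e})(-m)) \to H^1(\O(\vec{e})(-m))\]
vanishes. Thus the hypothesis says precisely that $\delta_m = 0$ for every $m \geq e_1$; for $m < e_1$ the target is already zero, so there is nothing to check.

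By the standard Cech description of the connecting homomorphism of an extension, $\delta_m$ is cup product with $\xi$. Under the decompositions $H^0(\O(\vec{e})(-m)) = \bigoplus_i H^0(\O(e_i - m))$ and $H^1(\O(\vec{e})(-m)) = \bigoplus_j H^1(\O(e_j - m))$, the $(i,j)$-entry of $\delta_m$ is cup product with $\xi_{ij}$. Now take $m = e_i$, which is permissible since $e_i \geq e_1$: then $H^0(\O(e_i - e_i)) = H^0(\O) = k$ is generated by $1$, and the $(i,j)$-entry of $\delta_{e_i}$ restricted to the $i$-th summand sends $1 \mapsto \xi_{ij}$. Vanishing of $\delta_{e_i}$ therefore forces $\xi_{ij} = 0$ for every $j$. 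Letting $i$ vary gives $\xi = 0$, so $E$ is trivial.

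The main obstacle, such as it is, is the entry-by-entry identification $\delta_m = \cup\, \xi$ under the $End$-decomposition. This is essentially bookkeeping, given a transition-cocycle description of the deformation $E$ and the standard Cech formula for the coboundary; but signs and conventions need to be tracked carefully to guarantee that the $(i,j)$ component of the connecting map really is cup product with $\xi_{ij}$ so that the extraction argument above is unambiguous.
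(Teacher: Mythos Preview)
Your argument is correct and takes a genuinely different route from the paper. The paper proceeds by induction on the rank: after twisting so that $E_0 \cong \O(-1)^{\oplus i}\oplus \O(\vec a)$ with all $a_j\ge 0$, it uses the long exact sequence coming from Lemma~\ref{meS} to produce a surjection $E\to \alpha^*\O(-1)^{\oplus i}$, checks that the kernel $F$ again satisfies the hypotheses, applies induction to get $F\cong \alpha^*\O(\vec a)$, and finally observes that the resulting extension must split. Your approach is instead a direct cohomological computation: you translate the freeness hypothesis into the vanishing of the connecting maps $\delta_m$, identify $\delta_m$ as cup product with the Kodaira--Spencer class $\xi\in H^1(End(\O(\vec e)))$, and then extract each matrix entry $\xi_{ij}$ by specializing $m=e_i$. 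This avoids induction entirely and ties the statement more transparently to the deformation-theoretic meaning of the hypothesis; the paper's proof, on the other hand, stays closer to the Str{\o}mme-type filtrations used elsewhere and makes the role of the ``most unbalanced summand'' explicit.

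One small notational correction: in your short exact sequence the outer terms are not $\alpha^*\O(\vec e)(-m)$ (a locally free sheaf on $\pp^1\times T$) but rather $\iota_*\big(\O(\vec e)(-m)\big)$ for the closed immersion $\iota:\pp^1\hookrightarrow \pp^1\times T$; multiplication by $\epsilon$ on $\alpha^*\O(\vec e)$ is not zero. This does not affect the argument, since after applying $\pi_*$ you get the same six-term sequence of $T$-modules (with $\epsilon$ acting trivially on the outer terms), and your dimension count and the identification $\delta_m=\cup\,\xi$ go through unchanged.
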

\begin{proof}
We induct on the rank of $E$. If $E$ is balanced there is nothing to prove, as every deformation is trivial. After twisting, we may write $E_0 \cong \O(\vec{e}) = \O(-1)^{\oplus i} \oplus \O(\vec{a})$ where every summand of $\O(\vec{a})$ is nonnegative. As all summands of $E_0$ have degree $> -2$, cohomology and base change shows that $(R^1\pi_*E)_0 = 0$ and hence $ R^1\pi_*E = 0$.
By hypothesis, $R^1\pi_*E(-1)$ is free of rank $i$.
By the proof of Lemma \ref{meS}, there is a long exact sequence on $T$, 
\[0 \rightarrow \pi^*(\pi_*E(-1))(-1) \rightarrow \pi^*\pi_*E \rightarrow E \rightarrow \pi^*(R^1\pi_*E(-1))(-1) \rightarrow \pi^*R^1\pi_*E \rightarrow 0.\]
In particular, we have a surjection $E \rightarrow \alpha^*\O(-1)^{\oplus i}$. Let $F$ denote the kernel, which is locally free of lower rank with $F_0 \cong \O(\vec{a})$. For each $m \geq 0$, we have an exact sequence on $\pp^1 \times T$
\[0 \rightarrow F(-m) \rightarrow E(-m) \rightarrow \alpha^*\O(-m-1) \rightarrow 0,\]
which pushes forward to give a sequence of vector bundles on $T$:
\[0 \rightarrow R^1\pi_*F(-m) \rightarrow R^1\pi_*E(-m) \rightarrow R^1\pi_*\alpha^*\O(-m-1)^{\oplus i} \rightarrow 0.\]
By hypothesis, $R^1\pi_*E(-m)$ is free of rank $h^1(\O(\vec{e})(-m))$. Meanwhile, the last term $R^1\pi_*\alpha^*\O(-m-1)^{\oplus i}$ is free of rank $h^1(\O(-1)^{\oplus i}(-m))$.
It follows that $R^1\pi_*F(-m)$ is free of rank $h^1(\O(\vec{e})(-m)) - h^1(\O(-1)^{\oplus i}(-m)) = h^1(\O(\vec{a})(-m))$. By induction, $F \cong \alpha^*\O(\vec{a})$ is the trivial deformation. Now we see
\begin{equation} \label{ha}
0 \rightarrow \alpha^*\O(\vec{a}) \rightarrow E \rightarrow \alpha^*\O(-1)^{\oplus i} \rightarrow 0.
\end{equation}
Finally, we have
\[H^1(T, Hom(\alpha^*\O(-1)^{\oplus i}, \alpha^*\O(\vec{a}))) = \bigoplus_{j=1}^{r-i} H^1(T, \alpha^*\O(a_j - 1))^{\oplus i} = 0,\]
so \eqref{ha} must split.
\end{proof}

We also require another description of the tangent space.
\begin{Lemma} \label{other}
Suppose $b \in \Sigma_{\vec{e}}(E)$. Write $\O(\vec{e}) = \O(-m_0)^{\oplus i} \oplus \alpha^*\O(\vec{a})$ where $a_i > -m_0$ for all $i$. Then
\[ \ker(\delta_{E,b}) =\{v \in \Mor_b(T, B) : \alpha^*\O(\vec{a}) \text{ is a subsheaf of } v^*E\} \]
\end{Lemma}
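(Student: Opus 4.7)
The direction $(\subseteq)$ follows immediately from the preceding lemma: if $v \in \ker(\delta_{E, b})$, then $v^*E \cong \alpha^*\O(\vec{e}) = \alpha^*\O(-m_0)^{\oplus i} \oplus \alpha^*\O(\vec{a})$, so $\alpha^*\O(\vec{a})$ sits inside as a summand. The content is in the reverse inclusion: given an injection $\iota \colon \alpha^*\O(\vec{a}) \hookrightarrow v^*E =: \tilde E$ on $\pp^1 \times T$, I need to show $\tilde E \cong \alpha^*\O(\vec{e})$ and then invoke the preceding lemma.

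The plan is to realize $\tilde E$ as an extension of two trivial deformations and then check that the extension splits. First, restrict $\iota$ to the central fiber to obtain $\bar\iota \colon \O(\vec{a}) \to \O(-m_0)^{\oplus i} \oplus \O(\vec{a})$. The hypothesis $a_k > -m_0$ makes $Hom(\O(\vec{a}), \O(-m_0)^{\oplus i}) = 0$, so $\bar\iota$ factors through the second summand as $\O(\vec{a}) \xrightarrow{\phi} \O(\vec{a}) \hookrightarrow \O(\vec{e})$. A local matrix calculation over $\O_{\pp^1, x}[\epsilon]/(\epsilon^2)$---any kernel vector $v_0$ of the reduction $M_0$ of the local matrix $M = M_0 + \epsilon M_1$ produces $\epsilon v_0 \in \ker M$---shows that $\iota$ injective forces $\bar\iota$ injective, hence $\phi$ injective; since $\det \phi \in H^0(\pp^1, \O) = k$ is then nonzero, $\phi$ is an automorphism. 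Next, I would set $Q := \tilde E / \iota(\alpha^*\O(\vec{a}))$ and apply the $Tor$ long exact sequence over $\O_T$ to get $Tor_1(Q, k) = \ker(\bar\iota) = 0$, so $Q$ is $\O_T$-flat; a Nakayama argument then promotes $Q$ to a locally free sheaf on $\pp^1 \times T$ with $Q|_0 \cong \O(-m_0)^{\oplus i}$, and rigidity of $\O(-m_0)^{\oplus i}$ gives $Q \cong \alpha^*\O(-m_0)^{\oplus i}$. Finally, the extension class of $0 \to \alpha^*\O(\vec{a}) \to \tilde E \to \alpha^*\O(-m_0)^{\oplus i} \to 0$ lives in $\bigoplus_k H^1(\pp^1, \O(a_k + m_0))^{\oplus i} \otimes \O_T$, which vanishes since each $a_k + m_0 > 0$. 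Therefore $\tilde E \cong \alpha^*\O(\vec{e})$.

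The main obstacle will be the passage from an abstract sheaf-theoretic injection $\iota$ to the conclusion that $\iota$ is a subbundle inclusion with locally free cokernel. A priori, the cokernel $Q$ could carry $\epsilon$-torsion and $\bar\iota$ could fail to be injective. The hypothesis $a_k > -m_0$ does double duty in resolving this: it forces $\bar\iota$ to land in the $\O(\vec{a})$ summand of $\O(\vec{e})$, where an injective endomorphism is automatically an automorphism because the determinant lies in the scalars $H^0(\pp^1, \O) = k$, and it also kills the relevant $H^1$ so the final extension class vanishes.
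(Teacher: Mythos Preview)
Your argument is correct and follows the same route as the paper: realize $\tilde E$ as an extension of two trivial deformations and then check the extension class vanishes. The paper twists so that $m_0 = 0$, asserts that the $\O(\vec a)$ subsheaf of $E_0$ is unique and a subbundle, and concludes directly that $Q$ is locally free with $Q_0 \cong \O^{\oplus i}$; it then finishes exactly as you do via the vanishing of $H^1(Hom(\alpha^*\O^{\oplus i}, \alpha^*\O(\vec a)))$.

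The only difference is one of detail. The paper's sentence ``It follows that $Q$ is locally free'' is terse: it does not explicitly verify that the restriction $\bar\iota$ is injective, which is what one needs to conclude $Tor_1^{\O_T}(Q,k)=0$. You supply this via the matrix argument ($v_0 \in \ker M_0 \Rightarrow \epsilon v_0 \in \ker M$) together with the observation that an injective endomorphism of $\O(\vec a)$ has nonvanishing determinant in $H^0(\O_{\pp^1}) = k$ and hence is an automorphism. This extra care is a genuine improvement in rigor, but it does not change the strategy; both proofs hinge on the same two uses of the inequality $a_k > -m_0$ (killing $Hom(\O(\vec a), \O(-m_0))$ to force $\bar\iota$ into the $\O(\vec a)$ summand, and killing the relevant $H^1$ to split the extension).
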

\begin{proof}
The left hand side is automatically contained in the right hand side. After an overall twist, we may assume that $m_0 = 0$. Now suppose $\alpha^*\O(\vec{a})$ is a subsheaf of $v^*E$. Let $Q$ be the quotient, so we have a short exact sequence over $T \times \pp^1$:
\begin{equation} \label{ha2}
0 \rightarrow \alpha^*\O(\vec{a}) \rightarrow E \rightarrow Q \rightarrow 0.
\end{equation}
The $\O(\vec{a})$ subsheaf of $E_0 = \O(\vec{e})$ is unique and is a subbundle. It follows that $Q$ is locally free and $Q_0 \cong \O^{\oplus i}$. Hence $Q \cong \alpha^*\O^{\oplus i}$ is trivial.
Now, because all summands of $\O(\vec{a})$ are positive, $H^1(T, Hom(\alpha^*\O^{\oplus i}, \alpha^*\O(\vec{a}))) = 0$, showing that \eqref{ha2} splits.
\end{proof}
\section{Certain degeneracy classes} \label{classes}

Suppose $E$ is a degree $\ell$, rank $r$ vector bundle on $\pp W \rightarrow B$. As before, let $L = \det W^\vee$. Finding the splitting loci of $E$ is the same problem as finding splitting loci of twists $E(i)$, so from now on, we assume $0 \leq \ell < r$.
We start by computing the classes of degeneracy loci of the form $\overline{\Sigma}_{(-m, *, \ldots, *)}$, where $*$'s indicate a balanced remainder, i.e.
\[(-m, *, \ldots, *) = \left(-m, \left\lfloor \frac{\ell + m}{r-1} \right\rfloor, \ldots,  \left\lceil \frac{\ell+m}{r-1} \right\rceil \right).\]
The expected codimension of $(-m, *, \ldots, *)$ is
\[h^1(\pp^1, End(\O(-m, *, \ldots, *))) = h^1(\pp^1, Bal^\vee(-m)) = r(m-1) + \ell + 1, \]
where $Bal$ denotes the balanced bundle of rank $r-1$ and degree $\ell + m$.

Assuming $\overline{\Sigma}_{(-m-1, *, \ldots, *)}$ occurs in higher codimension than $\overline{\Sigma}_{(-m, *, \ldots, *)}$, excision (see e.g. \cite[Prop. 1.14]{EH}) allows us to calculate the class of $\overline{\Sigma}_{(-m, *, \ldots, *)}$ on the open set $U_m = B \backslash \overline{\Sigma}_{(-m-1, *, \ldots, *)}$. By the theorem on cohomology and base change, over $U_m$, the pushforwards $\pi_*E(m)$ and $\pi_*E(m-1)$ are locally free of rank $(m+1)r + \ell$ and $mr+\ell$ respectively. Let $\mathcal{F} := (\pi_*E(m))^\vee$ and $\mathcal{G} := (L \otimes \pi_*E(m-1))^\vee$. Now equation \eqref{onB} becomes
\[0 \rightarrow \pi_*E^\vee(-m) \rightarrow \mathcal{F} \xrightarrow{\pi_*\psi} \mathcal{G}\otimes W^\vee \rightarrow R^1\pi_* E^\vee(-m) \rightarrow 0.\]
We have that $\overline{\Sigma}_{(-m, *, \ldots, *)}$ is precisely the locus where $\pi_*\psi: \F \rightarrow \G \otimes W^\vee$ fails to be injective on fibers. 
The expected codimension of this locus as a degeneracy locus of a map of vector bundles is
\[\rank (\G \otimes W^\vee) - \rank \F + 1  = r(m-1) + \ell + 1.\]
Therefore, applying Porteous' formula (see e.g. \cite[Thm. 12.4]{EH}) proves the following.
\begin{Lemma}
If $\codim \overline{\Sigma}_{(-m, *, \ldots, *)} = r(m-1)+\ell+1$ and $\codim \overline{\Sigma}_{(-m-1, *, \ldots, *)} > r(m-1) + \ell + 1$, then
\begin{equation} \label{umc}
[\overline{\Sigma}_{(-m,*,\ldots,*)}]=\left[\frac{c(\G \otimes W^\vee)}{c(\F)} \right]_{r(m-1)+\ell+1}
\end{equation}
where we formally invert the denominator and the subscript indicates that we take the component of the resulting class in that degree.
\end{Lemma}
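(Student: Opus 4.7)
The plan is to apply Porteous' formula to the map $\pi_*\psi$ from \eqref{onB} on a suitable open set, then use excision to promote the answer to all of $B$. First I would restrict to $U_m = B \setminus \overline{\Sigma}_{(-m-1, *, \ldots, *)}$. Since $\codim \overline{\Sigma}_{(-m-1, *, \ldots, *)} > r(m-1)+\ell+1$ by hypothesis, the excision exact sequence in Chow groups (as invoked in Example \ref{-1,1}) reduces the computation of $[\overline{\Sigma}_{(-m, *, \ldots, *)}]$ in $A^*(B)$ to its restriction in $A^*(U_m)$. On $U_m$, every fiber of $E$ has smallest summand $\geq -m$, so $R^1\pi_*E(m-1) = 0$ identically; by cohomology and base change, $\pi_*E(m-1)$ and $\pi_*E(m)$ are locally free of ranks $mr+\ell$ and $(m+1)r+\ell$ respectively, and \eqref{onB} becomes a four-term exact sequence of vector bundles on $U_m$.

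Next I would identify $\overline{\Sigma}_{(-m, *, \ldots, *)} \cap U_m$ with the locus where $\pi_*\psi$ fails to be injective. By \eqref{onB}, the fiber of $\ker(\pi_*\psi)$ at $b \in U_m$ is $(\pi_*E^\vee(-m))_b \cong \bigoplus_i H^0(\pp^1, \O(-e_i-m))$. Because $e_i \geq -m$ on $U_m$, this vanishes unless some $e_i$ equals $-m$, which is precisely the set-theoretic description of $\overline{\Sigma}_{(-m, *, \ldots, *)} \cap U_m$. The expected codimension of the non-injectivity locus for $\pi_*\psi$ is $\rank(\G \otimes W^\vee) - \rank \F + 1 = 2(mr+\ell) - ((m+1)r+\ell) + 1 = r(m-1)+\ell+1$, matching the hypothesized codimension of $\overline{\Sigma}_{(-m, *, \ldots, *)}$.

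With the codimensions aligned, Porteous' formula \cite[Thm. 12.4]{EH} produces the Chow class $[c(\G \otimes W^\vee)/c(\F)]_{r(m-1)+\ell+1}$ for the degeneracy scheme of $\pi_*\psi$, and excision propagates this to $A^*(B)$. The main subtlety I expect is reconciling this Porteous class with the fundamental class of $\overline{\Sigma}_{(-m, *, \ldots, *)}$ as defined by the Fitting ideal scheme structure of Section \ref{sub}. On $U_m$, however, the various cohomological conditions defining $\overline{\Sigma}_{(-m, *, \ldots, *)}$ collapse to the single Fitting condition on $\pi_*\psi$ because summands $< -m$ are excluded by the definition of $U_m$; so the two scheme structures agree on $U_m$, and in particular their fundamental classes coincide whenever both have the expected codimension, completing the identification.
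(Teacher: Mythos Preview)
Your proposal is correct and follows essentially the same approach as the paper: restrict to $U_m$, use cohomology and base change to make $\F$ and $\G$ locally free, identify $\overline{\Sigma}_{(-m,*,\ldots,*)}\cap U_m$ as the locus where $\pi_*\psi$ fails to be injective on fibers, apply Porteous, and then invoke excision. Your added paragraph on reconciling the Porteous degeneracy scheme with the Fitting-ideal scheme structure is more explicit than the paper (which simply asserts the identification), but it is consistent with the paper's setup and not a different method.
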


\begin{remark}
One might hope more generally to compute the class of $\overline{\Sigma}_{(-m^i, *, \ldots, *)}$ as the locus where $\dim \ker \psi \geq i$. The codimension is correct to apply Porteous' formula, and this does indeed yield the class of $\overline{\Sigma}_{(-m^i,*, \ldots,*)}$ \textit{restricted to} $U_m$. However, in general, $\codim U_m^c$ may be smaller than $\codim \overline{\Sigma}_{(-m^i,*, \ldots,*)}$, so the class can contain contributions from $U_m^c$. Nevertheless, this is approach is helpful if one has a family that is bounded in some way (see for example \cite[Section 4]{L}).
\end{remark}

\begin{remark}
In the case $U_m$ is irreducible and projective, 
Fulton-Lazarsfeld's theorem on connectedness of degeneracy loci \cite{FL} shows that $\overline{\Sigma}_{(-m^i,*, \ldots,*)}$ is connected if $\F^\vee \otimes \G \otimes W^\vee$ is ample.
\end{remark}

\section{The inductive algorithm} \label{gen}

In general, splitting degeneracy loci are determined by a sequence of cohomological conditions on the fibers, but the conditions are not transverse (see Example \ref{s2ex}).
This indicates that we need something more refined than Porteous calculations on the base.

The Porteous formula finds the class of where a map of vector bundles drops rank by pulling back to a Grassmann bundle, computing the class of where the universal subbundle includes into the kernel and pushing forward the result. 
To get an algorithm for arbitrary splitting types, instead of tracking degeneracy of the map $\pi_*\psi$ in \eqref{onB}, we need to track high degree subsheaves of the kernel of the map $\psi$ in \eqref{onP}. We do this by pulling back to an appropriate relative Quot scheme.
Utilizing Theorem \ref{em}, our answer also winds up being a pushforward of natural classes on a product of Grassmann bundles (and reduces to the Porteous formula in the special case). 

\subsection*{Algorithm and Proof of Theorem \ref{main}}
Fix some splitting type $\vec{e}$. We assume that $\Sigma_{\vec{e}}$ is codimension $u(\vec{e})$ and $Y = \overline{\Sigma}_{\vec{e}} \backslash \Sigma_{\vec{e}}$ has codimension greater than $u(\vec{e})$.
Inductively, we can assume we know a formula for the expected classes of splitting degeneracy loci for lower rank bundles, in terms of Chern classes of pushforwards of twists of the vector bundle.
As before, let $U_m = B \backslash \overline{\Sigma}_{(-m-1,*, \ldots, *)}$. 
Fix $m$ large enough that $(\O(\vec{e}))(m)$ is globally generated and $\codim U_m^c > h^1(End(\O(\vec{e}))$.
We will carry out our calculation of the class of $\overline{\Sigma}_{\vec{e}}$ on $U = U_m \backslash Y$, allowing us to assume $\overline{\Sigma}_{\vec{e}} = \Sigma_{\vec{e}}$. The result will hold on all of $B$ by excision.

Let $\F := (\pi_*E(m))^\vee$ and $\G := (L\otimes \pi_*E(m-1))^\vee$ be the vector bundles on $U$ as in the previous section.
Then \eqref{onP} becomes the exact sequence
\begin{equation} \label{forcod}
0 \rightarrow E^\vee(-m) \xrightarrow{\psi} \pi^* \F \rightarrow (\pi^* \G)(1) \rightarrow 0 
\end{equation}
on $\pp W$.
Finding where $E$ has splitting type $\O(\vec{e})$ is the same finding where $E^\vee(-m)$ has splitting type $\O(\vec{e})^\vee(-m)$. Let us write $\O(\vec{e})^\vee(-m) =\O(-m_0)^{\oplus i} \oplus \O(\vec{a})$ where each $a_j > -m_0$. Let $d =- \deg \O(\vec{a})$ and $s = \rank \O(\vec{a}) = r - i$. Any vector bundle admitting a subsheaf of splitting type $\vec{a}$ is at least as unbalanced as $\O(\vec{e})^\vee (-m)$. Therefore, our splitting locus is also described as 
\[\overline{\Sigma}_{\vec{e}} = \{b:  \text{ there exists } \O(\vec{a}') \hookrightarrow \ker \psi_b \text{ for $\vec{a}' \leq \vec{a}$}\}.\]

To describe the latter, let $\Quot_{\pi^*\F}$ be the relative Quot scheme of $\pi^* \F$ over $\pp W \rightarrow U $ parametrizing quotients with Hilbert polynomial $h(n) = (\rank \F - s)(n+1) + d$. On a curve, a subsheaf of a locally free sheaf is locally free, so this is equivalent to parametrizing locally free subsheaves of rank $s$ and degree $-d$.
Thus, we think of $\Quot_{\pi^*\F}$ as a fiber bundle over $U$ where the fiber over $b \in U$ is the Quot scheme parametrizing all quotients of $\F_b \otimes \O_{\pp W_b}$ where the subsheaf has rank and degree equal to $\O(\vec{a})$. Let us label maps of the fiber product as in \eqref{di} and the tautological bundles as in \eqref{ts}.

Consider the composition
\[\phi: \mathcal{S} \rightarrow q^*\pi^*\F \xrightarrow{q^*\psi} q^* (\pi^*\G)(1).\]
We have $\mathcal{S}_b \hookrightarrow (\ker \psi)_b$ when $\phi_b$ is the zero map. In other words, when $\phi$ vanishes, considered as a section of the vector bundle 
\[p_*Hom(\mathcal{S},q^*(\pi^*\G)(1)) = p_*(\mathcal{S}^\vee \otimes \O(1) \otimes p^*\gamma^*\G) = p_*(\mathcal{S}^\vee(1)) \otimes \gamma^*\G,\]
which is locally free by the theorem on cohomology and base change.
Let $\sigma$ denote the top Chern class of this vector bundle and let $Z_{\vec{a}}$ be the closure of the splitting locus in $\Quot_{\pi^*\F}$ over which $\mathcal{S}$ splits as $\O(\vec{a})$.  The splitting loci of $\mathcal{S}$ on each fiber of $\Quot_{\pi^*\F} \rightarrow U$ are the splitting loci on Str{\o}mme's Quot scheme. These are all described as quotients of open subsets of $Hom(\O(\vec{a}'), \F_b \otimes \O_{\pp^1})$ by $Aut(\O(\vec{a}'))$ and hence occur in the expected codimension.

\begin{Lemma} \label{zing}
We have $[\overline{\Sigma}_{\vec{e}}] = \gamma_*(\sigma \cdot [Z_{\vec{a}}]).$
\end{Lemma}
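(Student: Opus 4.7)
The plan is to identify $\sigma \cdot [Z_{\vec{a}}]$ as the class of the scheme-theoretic zero locus of $\phi$ restricted to $Z_{\vec{a}}$, and then push forward via $\gamma$ using a birational comparison with $\overline{\Sigma}_{\vec{e}}$. First, unpack $\phi$ as a section of the rank-$R$ bundle $p_*(\S^\vee(1)) \otimes \gamma^*\G$: its vanishing scheme $V(\phi) \subset \Quot_{\pi^*\F}$ parametrizes those pairs $(b, [\S \hookrightarrow \pi^*\F_b])$ for which $\S$ factors through $\ker(q^*\psi) = q^*E^\vee(-m)$. Intersecting with $Z_{\vec{a}}$ further imposes that the generic splitting type of $\S$ is $\vec{a}$.

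Second, I would verify that $\gamma$ restricts to a set-theoretic bijection from the support of $V(\phi) \cap Z_{\vec{a}}$ onto $\overline{\Sigma}_{\vec{e}}$. Surjectivity is exactly the set-theoretic description of $\overline{\Sigma}_{\vec{e}}$ recorded just before the lemma. For injectivity, note that on $U$ we have $\overline{\Sigma}_{\vec{e}} = \Sigma_{\vec{e}}$; for $b \in \Sigma_{\vec{e}}$ the fiber is $\ker\psi_b \cong \O(-m_0)^{\oplus i} \oplus \O(\vec{a})$, and because each entry of $\vec{a}$ is strictly greater than $-m_0$, no rank-$s$ degree-$(-d)$ subsheaf can use any piece of $\O(-m_0)^{\oplus i}$. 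Hence the unique candidate for $\S$ is the canonical $\O(\vec{a})$-summand, so there is a unique preimage $[\S]$ over each $b$. (This is the same argument used in the proof of Lemma \ref{other}.)

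Third, I would check that $\phi|_{Z_{\vec{a}}}$ is a regular section, so that $\sigma \cdot [Z_{\vec{a}}] = [V(\phi) \cap Z_{\vec{a}}]$ in $A^*(\Quot_{\pi^*\F})$. Equivalently, I want
\[
\dim Z_{\vec{a}} - \rank\bigl(p_*(\S^\vee(1)) \otimes \gamma^*\G\bigr) = \dim B - u(\vec{e}).
\]
To compute $\dim Z_{\vec{a}}$, view it as a bundle over $U$ whose fiber is the $\O(\vec{a})$-stratum of Str{\o}mme's Quot scheme, realized as an $Aut(\O(\vec{a}))$-quotient of an open subset of $Hom(\O(\vec{a}), \F_b \otimes \O_{\pp^1})$. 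Compute $\rank p_*(\S^\vee(1)) = h^0(\O(-\vec{a})(1))$ and $\rank \G = \chi(E(m-1))$, and expand $u(\vec{e}) = h^1 End(\O(\vec{e}))$ using the decomposition $\O(\vec{e})^\vee(-m) = \O(-m_0)^{\oplus i} \oplus \O(\vec{a})$. The asserted equality then reduces to a Riemann-Roch-type identity for bundles on $\pp^1$.

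Combining bijectivity of $\gamma$ on $V(\phi) \cap Z_{\vec{a}}$ with the dimension match gives $\gamma_*[V(\phi) \cap Z_{\vec{a}}] = [\overline{\Sigma}_{\vec{e}}]$, so $\gamma_*(\sigma \cdot [Z_{\vec{a}}]) = [\overline{\Sigma}_{\vec{e}}]$ on $U$, and excision extends the formula to all of $B$. The main obstacle is the dimension count of step three: ensuring the section $\phi|_{Z_{\vec{a}}}$ has the expected vanishing codimension is essential for identifying $\sigma \cdot [Z_{\vec{a}}]$ with the class of $V(\phi) \cap Z_{\vec{a}}$, and the numerical verification weaves together the Str{\o}mme-type dimension formula, the Euler characteristics of twists of $E$, and the deformation-theoretic definition of $u(\vec{e})$.
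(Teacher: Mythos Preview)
Your outline matches the paper's argument almost step for step: identify $V(\phi)$, establish the set-theoretic bijection $V(\phi)\cap Z_{\vec a}\to\overline\Sigma_{\vec e}$ via uniqueness of the $\O(\vec a)$ subsheaf, and carry out the dimension count showing $\phi|_{Z_{\vec a}}$ vanishes in the expected codimension. The paper packages the codimension check through the short exact sequence $0\to\O(-m_0)^{\oplus i}\oplus Bal\to\O^{\oplus\rank\F}\to\O(1)^{\oplus\rank\G}\to 0$ on $\pp^1$, which is a clean way to execute the Riemann--Roch identity you allude to.

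There is, however, a genuine gap in your final step. You write that ``bijectivity of $\gamma$ on $V(\phi)\cap Z_{\vec a}$ with the dimension match gives $\gamma_*[V(\phi)\cap Z_{\vec a}]=[\overline\Sigma_{\vec e}]$.'' A set-theoretic bijection together with equality of dimensions only yields $\gamma_*[V(\phi)\cap Z_{\vec a}]=m\,[\overline\Sigma_{\vec e}]$ for some positive integer $m$; nothing you have said rules out $m>1$. (Over a field of positive characteristic a bijective morphism can have degree $>1$, and even in characteristic zero the source scheme $V(\phi)\cap Z_{\vec a}$ could a priori be non-reduced along its generic points.) The paper closes this gap by invoking Lemmas~\ref{dlem} and~\ref{other} together: Lemma~\ref{other} identifies $\ker(\delta_{E,b})$ with the tangent directions along which the $\O(\vec a)$ subsheaf extends, and Lemma~\ref{dlem} identifies $\ker(\delta_{E,b})$ with $T_b\overline\Sigma_{\vec e}$. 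Combined, these say that $d\gamma$ is an isomorphism of tangent spaces at each point of $V(\phi)\cap Z_{\vec a}$, which forces $m=1$. You cite Lemma~\ref{other} only for the uniqueness statement, not for this infinitesimal comparison; that is the missing ingredient.
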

\begin{proof}
With our assumption $\overline{\Sigma}_{\vec{e}} = \Sigma_{\vec{e}}$, any subsheaf of $\ker \psi_b$ of splitting type $\vec{a}'\leq \vec{a}$ is unique (and actually $\vec{a}' = \vec{a}$).
Thus, by construction,
$\gamma$ sends $V(\phi) \cap Z_{\vec{a}}$ one-to-one onto $\overline{\Sigma}_{\vec{e}}$. Lemmas \ref{dlem} and \ref{other} show that $\gamma$ is an isomorphism on tangent spaces.
 In particular, $[\overline{\Sigma}_{\vec{e}}] = \gamma_*([V(\phi) \cap Z_{\vec{a}}])$. 
It also follows that
\begin{align*}
\codim V(\phi) \cap Z_{\vec{a}} &= u(\vec{e}) + \mathrm{fiber dim}(\gamma) \\
&= \codim Z_{\vec{a}} + h^1(\pp^1, End(\O(-m_0)^i \oplus Bal))  + \mathrm{fiber dim}(\gamma),
\end{align*}
where $Bal$ denotes the balanced bundle of rank $s$ and degree $-d$.
The fibers of $\gamma$ have dimension $h^0(\pp^1, Hom(Bal, \O^{\oplus \rank \F}) - h^0(\pp^1, End(Bal))$.  Therefore, the codimension of $V(\phi)$ inside $Z_{\vec{a}}$ is
\begin{equation}\label{codim}
h^1(\pp^1, Hom(Bal,\O(-m_0)^{\oplus i})) + h^0(\pp^1, Hom(Bal, \O^{\oplus \rank \F})) - h^0(\pp^1, End(Bal)).
\end{equation}
Now apply $Hom(Bal, -)$ to the exact sequence
\[0 \rightarrow \O(-m_0)^{\oplus i} \oplus Bal \rightarrow \O^{\oplus \rank \F} \rightarrow \O(1)^{\oplus \rank \G} \rightarrow 0\]
on $\pp^1$ and use the long exact sequence in cohomology to see that \eqref{codim} is equal to $h^0(\pp^1, Hom(Bal, \O(1)^{\oplus \rank \G})) = \rank(p_*(\mathcal{S}^\vee(1)) \otimes \gamma^*\G)$. This shows that $Z_{\vec{a}}$ and $V(\phi)$ meet in the expected codimension. Let us write $V(\phi) = V_0 \cup V_1$ where $V_0$ is the expected codimension and every component of $V_1$ has strictly larger dimension.
Then $\sigma$ differs from $[V_0]$ by a class supported in $V_1$, and $V_1 \cap Z_{\vec{a}} = \varnothing$. Therefore,
\[\sigma \cdot [Z_{\vec{a}}] = [V_0] \cdot [Z_{\vec{a}}] = [V_0 \cap Z_{\vec{a}}] = [V(\phi) \cap Z_{\vec{a}}]\]
and the result follows.
\end{proof}


 To compute the pushforward in Lemma \ref{zing}, we use Theorem \ref{em} to embed $\Quot_{\pi^*\F}$ into a product of Grassmann bundles and adopt the notation of that diagram.
 Our goal is to express $\sigma$ and $[Z_{\vec{a}}]$ as pullbacks of natural classes under $\iota^*$.
First, consider the Chern classes of $(p_*\mathcal{S}^\vee(1))^\vee$. 
Using Serre duality and Lemma \ref{ind}, we have the following equality in $K$-theory:
 \begin{align}
 (p_*\mathcal{S}^\vee(1))^\vee &= R^1p_*(\mathcal{S}(-1) \otimes q^*\O_{\pp W}(-2) \otimes p^*\gamma^*L) = -Rp_* \S(-3) \otimes \gamma^*L \notag \\
 &=(\Theta(d+2) \otimes W^\vee - \Theta(d+3) \otimes L) \otimes p_* \S(d) - \Theta(d+2) \otimes p_* \S(d-1) \otimes L. \label{ps}
 \end{align}
In the second line, we have used $Rp_* \S(d-1) = p_* \S(d-1)$ and the pullbacks by $\gamma$ are implicit. 
This determines a polynomial $\beta_{d}$ in the Chern classes of $W, S_{d-1}, S_d$ such that $c((p_*\mathcal{S}^\vee(1))^\vee) = \iota^*\beta_d$.
For example, in the case where $W$ is trivial, \eqref{ps} simplifies to
\[  (p_*\mathcal{S}^\vee(1))^\vee = (d+2)p_* \S(d) - (d+3)p_*\S(d-1) \qquad \text{from which} \qquad \beta_d = \frac{c(S_d)^{d+2}}{c(S_{d-1})^{d+3}}, \]
where we formally invert the denominator.
To obtain an expression for $\sigma$, we use a formula for the top Chern class of a tensor product of vector bundles (see e.g. \cite[Cor. 12.3]{EH}). Noting that $\rank (p_*\mathcal{S}^\vee(1))^\vee = d + 2s$, this gives
\begin{equation} \label{beta}
\sigma = c_{top}( p_*(\mathcal{S}^\vee(1)) \otimes \gamma^*\G)= \Delta^{d+2s}_{\rank \G}\left(\frac{c(\gamma^*\G)}{c((p_*(\mathcal{S}^\vee(1)))^\vee)} \right) = \iota^*\Delta^{d+2s}_{\rank \G} \left(\frac{c(\rho^*\G)}{\beta_d}\right).
 \end{equation}
Above, $\Delta^a_b$ denotes the standard determinantal class: given an input class $x$ in Chow, let $x_i$ be the component in degree $i$ and define
\[\Delta^a_b(x) = \Delta_{b,\ldots,b}(x) = \det\left(\begin{matrix}x_b & x_{b+1} & \cdots & x_{b+a-1} \\ x_{b-1} & x_{b} & \cdots & x_{b+a-2} \\ \vdots & \vdots & \ddots & \vdots  \\ x_{b-a+1} & x_{b-a+2} & \cdots & x_{b}\end{matrix} \right).\]

 Since $\rank \mathcal{S} < r$,
by our inductive hypothesis, we can assume we know a formula for $[Z_{\vec{a}}]$ in terms of Chern classes of bundles $p_*\mathcal{S}(i)$. Using Lemma \ref{ind}, we can write $[Z_{\vec{a}}] = \iota^*\alpha$ where $\alpha$ is some polynomial in the Chern classes of $S_{d-1},S_{d}$ and $W$.
Finally, using push-pull and the class of $\Quot_{\pi^*\F}$ given in Theorem \ref{em}, we have
 \begin{align}
[\overline{\Sigma}_{\vec{e}}] &= \gamma_* (\sigma\cdot [Z_{\vec{a}}] ) = \rho_* \iota_*(\sigma \cdot [Z_{\vec{a}}]) \notag \\
&= \rho_*\iota_*\left(\iota^*\Delta^{d+2s}_{\rank \G} \left(\frac{c(\rho^*\G)}{\beta_d}\right) \cdot \iota^* \alpha\right) \notag \\
 &= \rho_*\left([\Quot_{\pi^*\F}]  \cdot  \Delta^{d+2s}_{\rank \G} \left(\frac{c(g^*\G)}{\beta_d}\right)\cdot \alpha\right) \notag \\
 &= \rho_*\left(c_{top}(S_{d-1}^\vee \otimes Q_d \otimes \rho^*(W^\vee \otimes L^\vee)) \cdot \Delta^{d+2s}_{\rank \G} \left(\frac{c(\rho^*\G)}{\beta_d}\right) \cdot \alpha\right). \label{topush}
 \end{align}
The expression we need to push forward in \eqref{topush} can be solved for explicitly in terms of the pullbacks of Chern classes of $W, \F,$ and $\G$ and the Chern classes of the tautological bundles. The push forwards of all polynomials in the Chern classes of $S_d$ and $S_{d-1}$ are polynomials in the Chern classes of $\F$ and $W$, determined by \cite[Cor. 2.6]{HT}. 
Thus, we have all the necessary ingredients to compute the classes of splitting loci in terms of the Chern classes of $W$, $\mathcal{F} = (\pi_*E(m))^\vee$ and $\mathcal{G} = (L \otimes \pi_*E(m-1))^\vee$.

\begin{Example} [Splitting type $(-2,0,2)$, Example \ref{s2ex} revisited] \label{-202}
We explain how to find the class of $\overline{\Sigma}_{(-2,0,2)}$ using the general algorithm, supposing $W$ is trivial for simplicity.
The stratum $\overline{\Sigma}_{(-2,0,2)}$ is codimension $5$ so we may take $m = 2$. We have 
\[\O((-2,0,2))^\vee(- 2) = \O(-4) \oplus \O(-2) \oplus \O,\]
so $\vec{a} = (-2, 0)$. On $U$, the bundle $\mathcal{F} = (\pi_*E(2))^\vee$ has rank $9$ and $\G = (\pi_*E(1))^\vee$ has rank $6$. We form the relative Quot scheme $\Quot_{\pi^*\F},$ parametrizing rank $2$, degree $-2$ subsheaves of $\F$ on the fibers of $\pp^1 \times U \rightarrow U$. The bundle $p_*\mathcal{S}(1)$ has rank $2$ and $p_*\mathcal{S}(2)$ has rank $4$, so
Theorem \ref{em} embeds  $\Quot_{\pi^*\F}$ into the product of Grasmmann bundles 
\[\iota: \Quot_{\pi^*\F} \hookrightarrow G(2, \F^{\oplus 2}) \times_U G(4, \F^{\oplus 3}),\]
where the universal bundle $S_1$ (resp. $S_2$) restricts to $p_*\mathcal{S}(1)$ (resp. $p_*\mathcal{S}(2)$).
Moreover,
\[[\Quot_{\pi^*\F}] = c_{top}(S^\vee_{1} \otimes Q_2)^2 = \left(\Delta^2_{23}\left[\frac{c(Q_2)}{c(S_1)} \right]\right)^2 = \left(\Delta^2_{23} \left[ \frac{c(\rho^*\F)^3}{c(S_1)c(S_2)} \right]\right)^2. \]

On $\Quot_{\pi^*\F}$, the locus where $\mathcal{S}$ has splitting type $(-2,0)$ is the same as where $\mathcal{S}(1)$ has splitting type $(-1,1)$. This is given by the universal rank $2$ formula:
\[[Z_{(-2,0)}]= \left[ \frac{c((p_*\mathcal{S}(1))^\vee)^2}{c((p_*\mathcal{S}(2))^\vee)}\right]_1 = c_1(p_*\mathcal{S}(2)) - 2c_1(p_*\mathcal{S}(1)) = \iota^*(c_1(S_2) - 2c_1(S_1)).\]
Then \eqref{topush} says
\[[\overline{\Sigma}_{(-2,0,2)}] = \rho_*\left( \left(\Delta^2_{23}\left[\frac{c(\rho^*\F)^3}{c(S_1)c(S_2)} \right]\right)^2  \cdot \Delta^6_6 \left[c(\rho^*\G) \frac{c(S_1)^5}{c(S_2)^4} \right] \cdot (c_1(S_2) - 2c_1(S_1))\right).\]
The class inside the outer parenthesis is codimension $129$ and the relative fiber dimension of $\rho$ is $124$, so the pushforward is a codimension $5$ class on the base.

\subsubsection*{Explicit computation} Let $f_i = c_i(\mathcal{F}) = c_i((\pi_*E(m))^\vee)$ and $g_i = c_i(\mathcal{G}) =  c_i((\pi_*E(m-1))^\vee)$.
To implement the algorithm, we first stored all push forwards of monomials in the Chern classes of $S_1$ and $S_2$, as determined by \cite[Cor. 2.6]{HT}. This precomputation took 5 days on 6 cores.
Then, we expanded the above class as a polynomial in $\rho^* f_i, \rho^* g_i$ and the Chern classes of $S_1$ and $S_2$ and computed the push forward. The second step took 2 days on 6 cores and produced the following formula:
\begin{align*}
[\overline{\Sigma}_{(-2,0,2)}] = &  \ 4f_1^4g_1 - 8f_1^3g_1^2 + 4f_1^2g_1^3 - 3f_1^3f_2 - 6f_1^2f_2g_1 + 13f_1f_2g_1^2 - 4f_2g_1^3 + 8f_1^2g_1g_2 \\
&- 8f_1g_1^2g_2 + 6f_1f_2^2 + 3f_1^2f_3 - 2f_2^2g_1 + 2f_1f_3g_1 - 5f_3g_1^2 - 6f_1f_2g_2 - 2f_2g_1g_2 \\
&+ 4g_1g_2^2 - 8f_1g_1g_3 + 8g_1^2g_3 - 6f_2f_3 - 3f_1f_4 + 2f_4g_1 + 6f_3g_2 + 6f_2g_3 - 6g_2g_3 \\
&+ 2g_1g_4 + 3f_5 - 6g_5.
\end{align*}
Sage code for these processes may be found at \url{http://web.stanford.edu/~hlarson/}.

\end{Example}

\end{document}